\documentclass{article}
\usepackage[utf8]{inputenc}
\usepackage{amssymb,amsmath,latexsym}
\usepackage{enumerate}
\usepackage{soul}
\usepackage[english]{babel}
\usepackage{amsthm}
\usepackage{verbatim}
\usepackage{graphicx}
\usepackage{color}
\usepackage{tikz}
\usepackage{float}
\usetikzlibrary{calc, intersections, through, backgrounds}
\usetikzlibrary{shapes.geometric}
\tikzstyle{point}=[ball color=white, circle, draw=black, inner sep=0.1cm]
\tikzstyle{red}=[ball color=red, circle, draw=black, inner sep=0.1cm]
\tikzstyle{blue2}=[ball color=blue, circle, draw=black, inner sep=0.1cm]
\tikzstyle{black}=[ball color=black, circle, draw=black, inner sep=0.1cm]
\tikzstyle{green}=[ball color=green, circle, draw=black, inner sep=0.1cm]
\tikzstyle{violet}=[ball color=violet, circle, draw=black, inner sep=0.1cm]
\tikzstyle{yellow}=[ball color=yellow, circle, draw=black, inner sep=0.1cm]
\tikzstyle{white}=[ball color=white, circle, draw=black, inner sep=0.1cm]
\usepackage{caption}
\usepackage{subcaption}
\usepackage[colorinlistoftodos]{todonotes}
\usetikzlibrary{arrows.meta}
\usepackage{authblk}
\usepackage{orcidlink}
\usepackage{mathtools}

\newtheorem{theorem}{Theorem}[section]

\newtheorem{corollary}[theorem]{Corollary}

\newtheorem{definition}[theorem]{Definition}

\newtheorem{lemma}[theorem]{Lemma}

\newtheorem{problem}[theorem]{Problem}
\newtheorem{proposition}[theorem]{Proposition}
\newtheorem{remark}[theorem]{Remark}

\newcommand{\dcap}[2]{\mathrm{cap}^{\times}_{#1}(#2)}
\newcommand{\scap}[2]{\mathrm{cap}^{\boxtimes}_{#1}(#2)}
\newcommand{\ccap}[2]{\mathrm{cap}^{\mathbin{\Box}}_{#1}(#2)}

\DeclareMathOperator{\BoxProduct}{\mathbin{\Box}}
\DeclareMathOperator{\orb}{\mathrm{orb}}

\newcommand{\svSpan}[1]{\sigma^{\boxtimes}_V(#1)}
\newcommand{\dvSpan}[1]{\sigma^{\times}_V(#1)}
\newcommand{\cvSpan}[1]{\sigma^{\BoxProduct}_V(#1)}

\title{Span capacities of graphs}

\author[1,2]{Mateja Grašič\orcidlink{0000-0003-2684-7632}\thanks{mateja.grasic@um.si}}
\author[3]{Christopher Mouron\orcidlink{0009-0008-5122-7519}\thanks{mouronc@rhodes.edu}}
\author[4]{Aljoša Šubašić\orcidlink{0000-0002-6943-0856}\thanks{aljsub@pmfst.hr}}
\author[1,2]{Andrej Taranenko\orcidlink{0000-0002-2438-0496}\thanks{ andrej.taranenko@um.si}}
\author[4]{Tanja Vojković\orcidlink{0000-0002-8160-7757}\thanks{tanja@pmfst.hr}}

\affil[1]{Faculty of Natural Sciences and Mathematics, University of Maribor, Slovenia}
\affil[2]{Institute of Mathematics, Physics and Mechanics, Ljubljana, Slovenia}
\affil[3]{Department of Mathematics and Statistics, Rhodes College, Memphis, TN, 38112 USA}
\affil[4]{Faculty of Science, University of Split, Croatia}

\date{}

\begin{document}

\maketitle

\begin{abstract}
The $d$-capacity of a graph $G$ is introduced as the maximum number of players that can simultaneously traverse $G$ such that each player visits all vertices while maintaining a distance of at least $d$ under various movement rules. We determine their values for paths and cycles and provide bounds for bipartite graphs. Furthermore, we characterize topfull graphs, where the 1-capacities reach their theoretical maximum, establishing a connection to graph factorizations and connectivity.

\medskip
\noindent \textbf{Keywords:} span, span capacity, topfull graphs, factorizations, 2-connectivity.
\bigskip \noindent \textbf{MSC2020}: 05C12, 05C70, 05C75.
\end{abstract}

\section{Introduction and motivation}
In 2023, Banič and Taranenko \cite{BaTa23} introduced a graph-theoretic analogue of a well-known topological concept, the span of a continuum. Their work was motivated by the need for maintaining safe interpersonal distances during movement, particularly relevant during the pandemic. In the graph-theoretic setting, they considered two players, Alice and Bob, moving on the vertices of a graph via its edges, each with the aim of visiting all vertices or all edges, giving rise to the notions of vertex spans and edge spans. Three types of movement rules usually used in games on graphs were studied:
\begin{itemize}
\item Traditional movement rules: the players may, independently, either move or stay in place at each step;
\item Active movement rules: both players must move at each step;
\item Lazy movement rules: at each step, exactly one of the players moves.
\end{itemize}
For each of the two spans, these movement rules yield three variants, namely the strong, direct, and Cartesian span, respectively. The terminology is motivated by the characterizations of each span using graph products of the same name. These were also the basis for a polynomial time algorithm for determining the corresponding span for any connected graph. Graphs with the vertex span for each variant equal to zero were also characterised.  

Since then, several aspects of graph spans have been investigated in subsequent papers. 
In \cite{Erceg23}, vertex spans of several graph classes were determined and some relations between vertex spans were presented, i.e. the strong vertex span of any graph is always greater than or equal to the maximum of the values of the direct and Cartesian span, while the direct and Cartesian span differ by at most one, for any observed graph.

It was shown in \cite{DrMiTa25} that the vertex and edge span of the same variant can differ by at most 1. Structural properties of graphs with the strong vertex span equal to $1$ and an exponential time algorithm for the computation of shortest optimal walks with respect to the strong vertex span were also presented.

In \cite{GrMoTa25} using the novel notion of switching walks and triods of a tree the strong vertex span of any tree was determined and a linear time algorithm for its computation was presented. This vastly improved the time complexity of the general algorithm presented in \cite{BaTa23}.

Spans for specific graph classes, including multilayered cycles and paths were studied in \cite{SuVo24}. It was shown that vertex span values depend solely on the length of cycles and paths, instead the number of layers.  
Edge spans were studied in \cite{SuVo25}, and a family of graphs was found for which strong edge span is greater than both direct and Cartesian span. A family with such a property is yet to be found for vertex spans, if it exists, which indicates vertex and edge spans may differ in some important behaviours.

All previous papers consider only two players traversing a graph, which naturally leads to the generalization of the concept of span to settings with more than two players. 
This raises the following question: given that all players moving in a graph must maintain a prescribed distance $d$, what is the maximum number of players that can be placed on a graph? We introduce this number as the \emph{$d$-capacity of a graph}.

This question is also motivated by practical scenarios in which it is important, for safety or other reasons, to determine how many individuals can move through a given space or venue while maintaining a prescribed distance. In analogy with the three movement rules used in the definition of graph spans, we define the strong, direct, and Cartesian $d$-capacity.

The paper is organized as follows. In Section~\ref{sec:definitions}, we establish a necessary theoretical framework and introduce the terminology of marches and tours. Section~\ref{sec:results} contains our main results. We first define the three variants of span capacities and investigate their values for paths and cycles. We then provide bounds for bipartite graphs and conclude with a structural characterization of topfull graphs, linking the maximum $1$-capacity of a graph to its connectivity and factorization properties.  We conclude the paper with some open problems.

\section{Theory of marches}\label{sec:definitions}

First, we provide some initial definitions. All graphs considered in this paper are finite, connected, simple, and undirected. For any graph theoretic notions not explicitly defined here, see \cite{west2001introduction}. 

Let $G$ be a connected graph. Let $\ell$ be a non-negative integer. A \emph{weak walk} $W$ on $G$ is a sequence of vertices $w_0, w_1, \ldots, w_\ell$ in $V(G)$ such that $w_i w_{i+1} \in E(G)$ or $w_i = w_{i+1}$ for all $i\in\{0, 1, \ldots, \ell-1\}$. We denote it by $W: w_0, \ldots, w_\ell$ and also call it a weak $\ell$-walk. Note that if $w_i w_{i+1} \in E(G)$ for every $i\in\{0, 1, \ldots, \ell-1\}$, then this is the common notion of a \emph{walk} in $G$. Thus, every walk is also a weak walk. Note, $\ell$ may be 0, hence a (weak) $0$-walk is just one vertex. Moreover, since we consider (weak) walks parametrized by time, we may refer to indices of the walks as to \emph{points in time}. For this reason we also call a function $f: \{0,1,\ldots, \ell\}\rightarrow V(G)$ with $f(i)f(i+1)\in E(G)$, for any $i\in\{0,1,\ldots, \ell-1\}$, a walk. Similarly, we call a function $f: \{0,1,\ldots, \ell\}\rightarrow V(G)$ with $f(i)=f(i+1)$ or $f(i)f(i+1)\in E(G)$, for any $i\in\{0,1,\ldots, \ell-1\}$, a weak walk. 

\begin{definition}\label{def:distance-tracks}
Let $G$ be a connected graph, $\ell$ a non-negative integer and $f,g:\{0,...,\ell\}\rightarrow V(G)$ functions. The \emph{distance between $f$ and $g$} is defined as
\[
m_G(f,g)= \min\{d_G( f(i), g(i)) \mid i \in \{0,...,\ell\}\}.
\]
\end{definition}

\begin{definition}\label{def:distance-multiple-tracks}
Let $G$ be a connected graph, $p\geq 2$ a natural number and $\ell$ a non-negative integer. Let $f_1, f_2, \ldots, f_p:\{0,\ldots,\ell\}\rightarrow V(G)$ be functions. The distance between $f_1, f_2, \ldots, f_p$ is 
\[
    m_G(f_1, f_2, \ldots, f_p) = \min\{ m_G(f_i, f_j) \mid i,j\in \{1, 2, \ldots, p\} \text{ and } i\not=j  \}.
\]
\end{definition}

\begin{definition}
Let $G$ be a connected graph, $p$ a natural number and $\ell$ a non-negative integer. We say that $F=\{f_1,\ldots,f_p\}$ is a \emph{(weak) $\ell$-march} on $G$ if each $f_i:\{0,1,\ldots,\ell\}\rightarrow V(G)$ is a (weak) walk on $G$. Also, we define \[F(i):=\{f_1(i),\ldots,f_p(i)\} \] to be the \emph{$i$-th stage of $F$.} We call $\ell$ the length of $F$.
\end{definition}

\begin{remark}
When we say that $F=\{f_1,\ldots,f_p\}$ is a (weak) march on $G$, this means that there exists a non-negative integer $\ell$ such that $F$ is a (weak) $\ell$-march on $G$.
\end{remark}

For a (weak) march $F=\{f_1,\ldots,f_p\}$ on some connected graph $G$, we define 
$$m_G(F):=m_G(f_1, f_2, \ldots, f_p).$$ Also, if $p=1$ define $m_G(F):=\infty$.

\begin{definition}
Let $G$ be a connected graph, and $F=\{f_1,\ldots,f_p\}$ a (weak) march on $G$. We say that $F$ is a \emph{collision-free} (weak) march if $m_G(F)\geq 1$.
\end{definition}

\begin{definition}\label{def:orbit}
Let $G$ be a connected graph and let $F=\{f_1,\ldots,f_p\}$ be a collision-free (weak) $\ell$-march on $G$. If $v=f_s(0)\in\{f_1(0),\ldots,f_p(0)\}$, then the
\emph{orbit of $v$ under $F$} is 
\[\orb(F,v)=\orb(f_s,v)=\{f_s(0),f_s(1),\ldots,f_s(\ell)\}.\]
\end{definition} 

Note that in Definition \ref{def:orbit} the orbit is well defined since for any collision-free weak march $F=\{f_1,\ldots,f_p\}$ on the graph $G$ we have that $m_G(F)\geq 1$, hence $v=f_s(0)$ implies that $s$ is unique.

\begin{definition}\label{def:extension}
Let $G$ be a connected graph, $p$ a natural number, and let $F_1=\{f^1_1,\ldots,f^1_p\}$ be a (weak) $\ell_1$-march  on $G$ and $F_2=\{f^2_1,\ldots,f^2_p\}$ be a (weak) $\ell_2$-march on $G$ such that $F_1(\ell_1)=F_2(0)$. An \emph{extension of $F_1$ with $F_2$} is a (weak) $(\ell_1+\ell_2)$-march $F_2F_1=\{f_1,\ldots,f_p\}$ on $G$ defined by 
\[
f_s(i) = \begin{cases}
            f^1_s(i), & \text{if } i\in \{0,\ldots,\ell_1\},\\
            f^2_{k_s}(i-\ell_1), & \text{if } i\in \{\ell_1+1,\ldots,\ell_1+\ell_2\}
         \end{cases},
\]
where $k_s$ is the index such that  $f^1_s(\ell_1)=f^2_{k_s}(0)$. Inductively, we define $F_n F_{n-1} \ldots F_2 F_1=F_n(F_{n-1} \ldots F_2 F_1)$ in the same manner.
\end{definition}

\begin{remark}\label{rem:extension-cf}
	If both $F_1=\{f^1_1,\ldots,f^1_p\}$ and $F_2=\{f^2_1,\ldots,f^2_p\}$ are collision-free (weak) $\ell_1$- and $\ell_2$-marches, respectively, then $F_2F_1$ is also a collision-free (weak) $(\ell_1+\ell_2)$-march.
\end{remark}

\begin{definition}\label{def:reverse-march}
	Let $G$ be a connected graph. Let $F=\{f_1,\ldots,f_p\}$ be a (weak) $\ell$-march on $G$. The \emph{reverse of $F$}, denoted by $F^{-1}=\{\widehat{f}_1,\ldots,\widehat{f}_p\}$, 
	is the (weak) $\ell$-march on $G$ defined by $\widehat{f}_s(i)=f_s(\ell-i)$ for all $s\in \{1,\ldots,p\}$ and $i\in\{0,\ldots,\ell\}$.
\end{definition}

\begin{definition}\label{def:patient-march}
Let $G$ be a connected graph and $F=\{f_1,\ldots,f_p\}$ be a weak $\ell$-march on $G$. We say that $F$ is \emph{patient} if for each $i\in\{0,\ldots,\ell-1\}$ there exists $j\in\{1, 2, \ldots, p\}$ such that $f_j(i)f_j(i+1)\in E(G)$ and $f_{j'}(i)=f_{j'}(i+1)$ for each $j'\in\{1, 2, \ldots, p\}\setminus\{j\}$.
\end{definition}

\begin{remark}
If $F=\{f_1\}$ is a patient $\ell$-march on a connected graph $G$, then Definition \ref{def:patient-march} implies that $f_1$ is a walk on $G$.
\end{remark}

\begin{definition}\label{def:dual-march}
 Let $G$ be a connected graph of order $n$. Suppose that $F=\{f_1,\ldots,f_p\}$ is a patient $\ell$-march on $G$. The \emph{dual of $F$} is a patient $\ell$-march $D_F=\{g_1,\ldots,g_{n-p}\}$ on $G$ such that
 $\{f_1(i),\ldots,f_p(i)\}\cup \{g_1(i),\ldots,g_{n-p}(i)\}=V(G)$, for any $i\in\{0,1,\ldots,\ell\}$. 
\end{definition}

\begin{figure}[htbp]
    \centering
    \begin{tikzpicture} [scale=1]
\draw (1,1)--(1,3)--(5,3) (1,2)--(3,2)--(3,3) (3,2)--(4,1);
\filldraw [black] (1,1) circle (3pt);
\filldraw [white] (4,1) circle (3pt);
\filldraw [black] (1,2) circle (3pt);
\filldraw [black] (2,2) circle (3pt);
\filldraw [black] (3,2) circle (3pt);
\filldraw [black] (1,3) circle (3pt);
\filldraw [black] (2,3) circle (3pt);
\filldraw [white] (3,3) circle (3pt);
\filldraw [white] (4,3) circle (3pt);
\filldraw [white] (5,3) circle (3pt);
\end{tikzpicture}
\hspace{50 pt}
\begin{tikzpicture} [scale=1]
\draw (1,1)--(1,3)--(5,3) (1,2)--(3,2)--(3,3) (3,2)--(4,1);
\filldraw [black] (1,1) circle (3pt);
\filldraw [black] (4,1) circle (3pt);
\filldraw [black] (1,2) circle (3pt);
\filldraw [black] (2,2) circle (3pt);
\filldraw [white] (3,2) circle (3pt);
\filldraw [black] (1,3) circle (3pt);
\filldraw [black] (2,3) circle (3pt);
\filldraw [white] (3,3) circle (3pt);
\filldraw [white] (4,3) circle (3pt);
\filldraw [white] (5,3) circle (3pt);
\end{tikzpicture}
    \caption{An example of two consecutive stages of a patient march.}
    \label{fig:figure_1}
\end{figure}

With respect to Definition \ref{def:dual-march}, we think of $\{f_1(i),\ldots,f_p(i)\}$ as the \emph{occupied vertices} and $\{g_1(i),\ldots,g_{n-p}(i)\}$ as the \emph{vacant vertices} of $G$ at stage $i$. Note that this implies that $f_s(i)\not=g_t(i)$ for all applicable $i,s,t$. An example of a patient march $F$ and its dual $D_F$, say in their initial stage (left) and after the first move (right), is given in Figure \ref{fig:figure_1}, where at each stage the occupied vertices are shown in black and the vacant vertices are white.  

Also note that if a patient march $F$  on $G$ is defined, then its dual $D_F$ is automatically defined. Also, $F=D_{D_F}$ is the dual of $D_F$, so it follows that if $D_F$ is defined, then $F$ is also defined. 

Next, we introduce the notion of (weak) tracks, which are (weak) walks through all the vertices of a given graph. These were already defined in \cite{Erceg23}, but we state them here for clarity. We also define a new notion of patient $\ell$-tracks, needed for the definition of the Cartesian $d$-capacity.

\begin{definition} Let $G$ be a connected graph and $\ell$ a non-negative integer. We say that a (weak) $\ell$-walk $f : \{0,...,\ell\} \rightarrow V(G)$ is a \emph{(weak) $\ell$-track on $G$} if $f$ is surjective.
\end{definition}

\begin{definition}
    Let $G$ be a connected graph and let $F=\{f_1,\ldots,f_p\}$ be a (weak / patient) $\ell$-march on $G$. If $f_i$ is a (weak) $\ell$-track for all $i\in \{1,\ldots,p\}$ we say that $F$ is a \emph{(weak / patient) $\ell$-tour on $G$}.
\end{definition}
   
\begin{remark}
Let $G$ be a connected graph. Where the order of the domain is not important for the context, we say that a function $f$ is a (weak) track on $G$ meaning that there exists a non-negative integer $\ell$ such that $f$ is a (weak) $\ell$-track. Similarly, we say that $F$ is a (weak / patient) tour meaning that there exists a non-negative integer $\ell$ such that $F$ is a (weak / patient) $\ell$-tour.
\end{remark}

\section{The span capacities}\label{sec:results}

We now restate the definitions of the three vertex span variants. These were originally introduced in \cite{BaTa23}, while their track-based equivalents were presented in \cite{Erceg23}. For the first two variants, we adopt the track-based terminology. For the third, we use patient tours instead of \emph{opposite tracks}, as used in \cite{Erceg23}. It is straightforward to verify that these notions are equivalent.

\begin{definition}[\cite{BaTa23, Erceg23}]\label{def:allSpans}
Let $G$ be a connected graph. 
\begin{enumerate}[(i)]
    \item The \emph{strong vertex span} of the graph $G$, denoted by $\svSpan{G}$, is $$\svSpan{G} = \max \{ m_G(f,g)  \mid  \ell\in \mathbb{Z}_{\ge 0} \text{ and } f,g \text{ are weak $\ell$-tracks on $G$} \}.$$
    \item The \emph{direct vertex span} of the graph $G$, denoted by $\dvSpan{G}$, is $$\dvSpan{G} = \max \{ m_G(f,g)  \mid  \ell\in \mathbb{Z}_{\ge 0} \text{ and } f,g \text{ are $\ell$-tracks on $G$} \}.$$
    \item The \emph{Cartesian vertex span} of the graph $G$, denoted by $\cvSpan{G}$, is $$\cvSpan{G} = \max \{ m_G(f,g)  \mid  \ell\in \mathbb{Z}_{\ge 0} \text{ and } \{f,g\} \text{ is a patient $\ell$-tour on $G$} \}.$$
\end{enumerate}
 \end{definition}

Now, we give the definitions of three versions of capacities corresponding to three different versions of spans. 

\begin{definition}\label{def:direct-capacity}
    Let $G$ be a non-trivial graph, and $d \leq \sigma^\times_V(G)$ a natural number. We define the \emph{direct $d$-capacity} of the graph $G$, denoted by $\dcap{d}{G}$, as the maximum integer $c$ such that there exists an 
    $\ell$-tour $F=\{f_1,f_2,\ldots,f_c\}$ on $G$ satisfying $m_G(F)=d$.
    If $d=\sigma^\times_V(G)$, we call it the \emph{direct capacity} and denote it by $\dcap{}{G}$.
\end{definition}

\begin{definition}\label{def:strong-capacity}
    Let $G$ be a non-trivial graph, and $d \leq \sigma^\boxtimes_V(G)$ a natural number. We define the \emph{strong $d$-capacity} of graph $G$, denoted by $\scap{d}{G}$, as the maximum integer $c$ such that there exists a weak $\ell$-tour $F=\{f_1,f_2,\ldots,f_c\}$ on $G$ satisfying $m_G(F)= d$.
    If $d=\sigma^\boxtimes_V(G)$, we call it the \emph{strong capacity} and denote it by $\scap{}{G}$.
\end{definition}

\begin{definition}\label{def:Cartesian-capacity}
    Let $G$ be a non-trivial graph different from a path and $d \leq \sigma^\square_V(G)$ a natural number. We define the \emph{Cartesian $d$-capacity} of graph $G$, denoted by $\ccap{d}{G}$, as the maximum integer $c$ such that there exists a patient $\ell$-tour $F=\{f_1,f_2,\ldots,f_c\}$ on $G$ satisfying $m_G(F)= d$.
    If $d=\sigma^\square_V(G)$, we call it the \emph{Cartesian capacity} and denote it by $\ccap{}{G}$.
\end{definition}

Note that in Definitions \ref{def:direct-capacity} and \ref{def:strong-capacity} we require the graphs to be non-trivial and in Definition \ref{def:Cartesian-capacity} not a path, as well. This is due to the fact that the corresponding spans in these cases are equal to 0. If we were to allow for $d$ to equal zero in the above definitions, those corresponding capacities would be unbounded and of no interest for research. Hence, in the above definitions all observed graphs have the corresponding span at least 1, so $d$ being a natural number less than or equal to the corresponding span is well-defined. 

\subsection{Basic results}

Some basic relations regarding capacities are given in following propositions.  

\begin{proposition}\label{prop:basic1a}
    Let $G$ be a non-trivial graph. For each $d\in\mathbb{N}$, with $d\leq \sigma^\times_V(G)$, it holds true that
    $$\scap{d}{G}\geq \dcap{d}{G}.$$
\end{proposition}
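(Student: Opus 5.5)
The plan is to show that every tour witnessing $\dcap{d}{G}$ is also admissible in the definition of $\scap{d}{G}$. The key observation is that every walk is a weak walk, as noted right after the definition of weak walks in Section~\ref{sec:definitions}. Consequently every $\ell$-track is a weak $\ell$-track, and every $\ell$-tour is a weak $\ell$-tour.

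First I would check that both quantities are defined for the given $d$. Here $d\le \sigma^\times_V(G)$ by hypothesis. Any pair of $\ell$-tracks $f,g$ is also a pair of weak $\ell$-tracks, so the maximum defining $\svSpan{G}$ ranges over a superset of the set defining $\dvSpan{G}$. Hence $\dvSpan{G}\le\svSpan{G}$, so $d\le \sigma^\boxtimes_V(G)$ and $\scap{d}{G}$ is defined (Definition~\ref{def:strong-capacity}).

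Next, let $c=\dcap{d}{G}$. By Definition~\ref{def:direct-capacity} there is an $\ell$-tour $F=\{f_1,\ldots,f_c\}$ on $G$ with $m_G(F)=d$. Each $f_i$ is a surjective walk, hence a surjective weak walk, i.e.\ a weak $\ell$-track. So $F$ is a weak $\ell$-tour. The quantity $m_G(F)$ depends only on the functions $f_i$ (Definitions~\ref{def:distance-tracks} and~\ref{def:distance-multiple-tracks}), not on which class of march they are viewed in, so it remains equal to $d$. Thus $F$ is admissible for Definition~\ref{def:strong-capacity}, and taking the maximum gives $\scap{d}{G}\ge c=\dcap{d}{G}$.

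There is no real obstacle. The only point needing care is the well-definedness check in the first step, namely that $d$ does not exceed the strong span, which the inclusion of walks into weak walks settles immediately.
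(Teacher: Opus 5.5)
Your proof is correct and uses the same argument as the paper, whose one-line proof notes that every $\ell$-track is a weak $\ell$-track, so a tour witnessing the direct $d$-capacity is also admissible for the strong $d$-capacity. Your extra check that $\sigma^\times_V(G)\le\sigma^\boxtimes_V(G)$, which guarantees that $\scap{d}{G}$ is defined for the given $d$, is valid and fills in a detail the paper leaves implicit.
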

\begin{proof}
    The inequality follows immediately from the fact that every $\ell$-track is also a weak $\ell$-track.
\end{proof}

\begin{proposition}\label{prop:basic1b}
    Let $G$ be a non-trivial graph different from a path. For each $d\in\mathbb{N}$, with $d \leq \sigma^\square_V(G)$, it holds true that
    $$\scap{d}{G}\geq \ccap{d}{G}.$$
\end{proposition}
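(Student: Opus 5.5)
The plan is to mirror the proof of Proposition \ref{prop:basic1a}: every patient $\ell$-tour is a weak $\ell$-tour, so the maximum defining the strong $d$-capacity is taken over a larger family than the one defining the Cartesian $d$-capacity. The one point needing care, absent in Proposition \ref{prop:basic1a}, is that both sides must be defined for the same $d$, since the Cartesian $d$-capacity uses $d\le \sigma^\square_V(G)$ while the strong one needs $d\le \sigma^\boxtimes_V(G)$.

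\textbf{Step 1: both sides are defined.} Let $\{f,g\}$ be a patient tour realizing $\cvSpan{G}$. At each step exactly one of $f,g$ moves along an edge and the other stays put, so each of $f,g$ is a weak walk. Both are surjective, so $f,g$ are weak $\ell$-tracks. Hence they are admissible in the definition of $\svSpan{G}$, which gives $\svSpan{G}\ge \cvSpan{G}$. Consequently every natural $d\le \sigma^\square_V(G)$ also satisfies $d\le\sigma^\boxtimes_V(G)$, and $G$, being non-trivial and not a path, satisfies the hypotheses of Definition \ref{def:strong-capacity}. So $\scap{d}{G}$ is defined.

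\textbf{Step 2: comparing the maxima.} Let $c=\ccap{d}{G}$ and let $F=\{f_1,\ldots,f_c\}$ be a patient $\ell$-tour with $m_G(F)=d$. By Definition \ref{def:patient-march}, for each $i$ every $f_j$ either moves along an edge or stays, so each $f_j$ is a weak walk. Each $f_j$ is surjective, so it is a weak $\ell$-track, and $F$ is a weak $\ell$-tour with the same value $m_G(F)=d$. Thus $F$ is admissible for Definition \ref{def:strong-capacity}, giving $\scap{d}{G}\ge c$.

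The only possible obstacle is the well-definedness in Step 1, and it reduces to the same inclusion used in Step 2: patient tours are weak tours. Nothing beyond unwinding the definitions is required.
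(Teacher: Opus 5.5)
Your proposal is correct and follows the paper's argument: every patient $\ell$-tour with $m_G(F)=d$ is already a weak $\ell$-tour with the same $m_G(F)$, so the maximum defining $\scap{d}{G}$ is taken over a larger family. Your Step 1, which checks that $\sigma^\boxtimes_V(G)\ge\sigma^\square_V(G)$ so that $\scap{d}{G}$ is defined for every $d\le\sigma^\square_V(G)$, is a valid point of care that the paper's one-line proof leaves implicit.
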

\begin{proof}
    The inequality follows immediately from the fact that every patient $\ell$-track is also a weak $\ell$-track.
\end{proof}

\begin{proposition}\label{prop:basic2a}
    Let $G$ be a non-trivial graph. For each $\mathbin{\ast} \in \{\boxtimes,\times\}$, and for each $d\in\mathbb{N}$, with $d\leq\sigma^\ast_V(G)$, it holds true that
    $$\mathrm{cap}_{d}^{\ast}(G)\geq \mathrm{cap}^{\ast}(G)\geq 2.$$
\end{proposition}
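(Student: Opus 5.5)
The plan is to prove the two inequalities separately, starting with $\mathrm{cap}^{\ast}(G)\geq 2$. By definition $\mathrm{cap}^{\ast}(G)=\mathrm{cap}^{\ast}_{\sigma}(G)$ with $\sigma=\sigma^\ast_V(G)\geq 1$. Definition \ref{def:allSpans} gives (weak) $\ell$-tracks $f,g$ with $m_G(f,g)=\sigma$. The set $F=\{f,g\}$ is then a (weak) $\ell$-tour with $m_G(F)=\sigma$, so the capacity is at least $2$. One should also check that the capacity is finite, so that the maximum exists: any collision-free march has at most $|V(G)|$ players.

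For $\mathrm{cap}^\ast_d(G)\geq \mathrm{cap}^\ast(G)$ with $d\leq\sigma$, I would start from a tour $F=\{f_1,\dots,f_c\}$ with $c=\mathrm{cap}^\ast(G)$ and $m_G(F)=\sigma$. If $d=\sigma$ there is nothing to prove. If $d<\sigma$, the difficulty is that the definition demands $m_G(F)=d$ exactly, not $\geq d$. So $F$ has to be extended by a march that pushes the minimum pairwise distance down to exactly $d$, never below it, and then the march is undone.

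In the strong case I would do this as follows. Let $F$ end at stage $\ell$ and pick two players, say $f_1,f_2$. Take a geodesic from $f_1(\ell)$ to $f_2(\ell)$. Build a weak march $M$ in which $f_1$ walks along this geodesic one step at a time while all other players stay put. At each step only one player moves, and by one edge, so every pairwise distance, and hence the minimum pairwise distance, changes by at most $1$. At the start of $M$ the minimum is $\geq\sigma>d$; if $f_1$ walked all the way it would reach distance $0$ from $f_2$. Hence there is a first step at which the minimum equals exactly $d$; truncate $M$ there, so that all stages of $M$ have minimum $\geq d$. Then the extension $M^{-1}MF$ (Definition \ref{def:extension} together with the reverse march from Definition \ref{def:reverse-march}) is still a weak tour, since every track already covered $V(G)$ within $F$. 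Its distance is exactly $d$, and it has $c$ players. This gives $\scap{d}{G}\geq\scap{}{G}$.

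The main obstacle is the direct case, where every player must move at each step. A pairwise distance can then change by $2$ in a single step, so the value $d$ might be skipped, and no player can simply wait. My first attempt would be to let the non-chosen players oscillate along an incident edge ($x\to y\to x$) while $f_1$ advances along the geodesic. I would then control parities, or alternatively let $f_1$ advance while $f_2$ oscillates, so that $d_G(f_1,f_2)$ decreases by at most $1$ per step and the first time the minimum reaches $\leq d$ it equals $d$. The delicate point is that the oscillations of the other players may themselves lower distances below $d$. To avoid this I would try to perform the approach only between the pair realizing the minimum at the critical moment, and otherwise fall back on splitting into two-step moves where all non-moving players return to their original positions.
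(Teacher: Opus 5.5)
Your proof of $\mathrm{cap}^{\ast}(G)\ge 2$ and your treatment of the strong case are correct. The paper just declares the whole statement immediate from the definitions, which is literally true only if the condition is read as $m_G(F)\ge d$. With $m_G(F)=d$ one has to produce a stage at distance exactly $d$, as you noticed. Your strong-case march, in which one player walks a geodesic while everyone else waits, changes the stage-wise minimum by at most $1$ per step, so it hits $d$ before dropping below it. ($MF$ already suffices; appending $M^{-1}$ is harmless but unnecessary.) The gap is the direct case $\ast=\times$. What you write there is a list of tentative strategies, and the difficulty you flag yourself is never resolved. So $\dcap{d}{G}\ge\dcap{}{G}$ for $d<\sigma^\times_V(G)$ is not proved.

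Worse, the underlying plan of extending an optimal direct tour $F$ cannot work in general. Under active movement in a bipartite graph every player switches colour class at every step. Hence for each pair $i\neq j$ the parity of $d_G(f_i(t),f_j(t))$ is the same at every stage of $F$ and of any extension of $F$. In particular, your claim that with $f_1$ advancing and $f_2$ oscillating the distance $d_G(f_1,f_2)$ ``decreases by at most $1$ per step'' is false there: it changes by $0$ or $\pm 2$. Concretely, take $G=K_{r,s}$ with $2\le r\le s$, so $\sigma^\times_V(G)=2$. Every tour with $m_G(F)=2$ keeps all its players in one part at each stage, so all pairwise distances stay even and no extension of $F$ ever attains $m_G=1$. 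In the direct case you therefore have to build a tour with a \emph{different} configuration rather than append to the optimal one. For $d=1$ this is easy. Let $H=F^{-1}F=\{h_1,\dots,h_c\}$, and put $g(0)=h_1(1)$ and $g(t)=h_1(t-1)$ for $t\ge 1$. Then $g$ is a track, $d_G(g(t),h_1(t))=1$, and $d_G(g(t),h_j(t))\ge\sigma^\times_V(G)-1\ge 1$ for $j\ne 1$. So $H\cup\{g\}$ is a tour with $c+1$ players and distance exactly $1$. For $1<d<\sigma^\times_V(G)$ you need an additional argument that controls these parity and jump-by-$2$ effects, and your proposal does not provide one.
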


\begin{proof}
    This claim is easily seen directly from the definition of the direct and strong vertex span.
\end{proof}

\begin{proposition}\label{prop:basic2b}
    Let $G$ be a non-trivial graph different from a path. For each $d\in\mathbb{N}$, with $d\leq\sigma^\square_V(G)$, it holds true that
    $$\ccap{d}{G}\geq \ccap{}{G}\geq 2.$$
\end{proposition}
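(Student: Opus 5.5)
The plan is to treat the two inequalities separately, in the same spirit as Proposition~\ref{prop:basic2a}, but with the extra care needed because tours in the Cartesian setting must be patient.

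\textbf{Step 1: $\ccap{}{G}\geq 2$.} Since $G$ is non-trivial and not a path, $\sigma^\square_V(G)\geq 1$. By Definition~\ref{def:allSpans}(iii), the maximum is attained by some patient $\ell$-tour $\{f,g\}$ with $m_G(f,g)=\sigma^\square_V(G)$. This two-element patient tour is admissible in Definition~\ref{def:Cartesian-capacity} with $d=\sigma^\square_V(G)$. The set $\{f,g\}$ really has two distinct members, because $m_G(f,g)\geq 1$ forces $f\neq g$. Hence $\ccap{}{G}\geq 2$.

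\textbf{Step 2: $\ccap{d}{G}\geq \ccap{}{G}$ for $d\le\sigma^\square_V(G)$.} Let $c=\ccap{}{G}$, witnessed by a patient tour $F=\{f_1,\ldots,f_c\}$ with $m_G(F)=\sigma^\square_V(G)=:D$. The main obstacle is that the definition demands $m_G(F)=d$ \emph{exactly}, not $m_G(F)\ge d$. So we must modify $F$ into a patient tour with the same number of players whose distance drops to exactly $d$, without pushing it below $d$ at any stage.

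\textbf{Step 3: lowering the distance.} We pick two players $f_a,f_b$ and an index $i$ with $d_G(f_a(i),f_b(i))=D$. We append to $F$ (in the sense of Definition~\ref{def:extension}, after the final stage, or inserted at stage $i$) a patient march in which only $f_a$ moves, one edge at a time, along a shortest path towards $f_b(i)$. Each single step changes every pairwise distance by at most one. We then move $f_a$ back along the same path in reverse (Definition~\ref{def:reverse-march}), so that the extension glues consistently via Remark~\ref{rem:extension-cf}. Tracks stay surjective, since the original portion already visits all vertices.

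The difficulty is that moving $f_a$ may bring it within distance $<d$ of some other player $f_k$ before its distance to $f_b$ reaches $d$. To handle this, we stop the movement at the first stage where the overall minimum pairwise distance equals $d$. Since distances change by at most $1$ per patient step, and the minimum starts at $D\ge d$, the first time the minimum changes to a value below $D$ it decreases by exactly one. So a stage with minimum exactly $d$ is reached before any value below $d$ appears, provided some pair eventually gets within distance $d$. Walking $f_a$ all the way to a neighbour of $f_b(i)$ achieves distance $1\le d$. The one point to check is that $f_a$ can actually travel along that shortest path, i.e.\ that it does not need to pass through a vertex occupied by another player; but if that vertex is occupied, the minimum distance has already hit $\le 1\le d$ earlier, so we stop before that point. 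Truncating there and returning along the reverse path gives a patient tour of $c$ players with $m_G=d$, which yields $\ccap{d}{G}\ge c$.
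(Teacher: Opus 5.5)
Your proof is correct. It is more careful than the paper's, which only says that both inequalities are "easily seen directly from the definition of Cartesian vertex span."

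\textbf{What the paper's justification covers.} The one-line justification really only handles $\ccap{}{G}\geq 2$, which is your Step 1: a span-realizing patient pair is itself an admissible two-player tour for $d=\sigma^\square_V(G)$. For $\ccap{d}{G}\geq \ccap{}{G}$, the paper treats the capacity as monotone in $d$. That would be immediate if the definition required only $m_G(F)\geq d$. However, Definition~\ref{def:Cartesian-capacity} requires $m_G(F)=d$ exactly.

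\textbf{What your argument adds.} Your detour construction supplies the missing step. It rests on three observations:
\begin{enumerate}[(i)]
    \item a single patient step changes every pairwise distance by at most one, so the stage-wise minimum cannot jump past $d$;
    \item a blocked move would already mean some player is at distance $1\le d$ from $f_a$, so you stop before it is needed;
    \item the original stages contribute only distances $\geq D\geq d$, and surjectivity is inherited.
\end{enumerate}

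\textbf{A presentational wrinkle.} If you append the detour after the final stage, aim $f_a$ at the current position $f_b(\ell)$, not at $f_b(i)$. Any pair of players will do, so there is no need to pick a pair at distance $D$, and no return trip is needed. The return via the reverse march is only needed in the other version, where you insert the detour at stage $i$ and then resume the rest of $F$. Either version gives a patient tour with $c$ players and $m_G=d$.

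\textbf{Scope of the trick.} The same argument works verbatim for weak tours, i.e.\ the strong part of Proposition~\ref{prop:basic2a}. It does not transfer directly to the direct variant, where all players must move at every step.
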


\begin{proof}
    This claim is easily seen directly from the definition of Cartesian vertex span.
\end{proof}

It is easy to see that there exist graphs for which any variant of capacity is greater than $2$. The smallest (regarding order and size) such example, in the case of the strong capacity, is $P_3$. We have $\sigma^\boxtimes_V(P_3)=1$. It holds true that
$$\scap{}{P_3}=3.$$

\subsection{Paths and cycles}

We will start by observing capacities of well-known classes of graphs: paths and cycles. It is known from \cite{BaTa23} and \cite{Erceg23}, that for any $n\geq2$, $\svSpan{P_n}=\dvSpan{P_n}=1$ and $\cvSpan{P_n}=0$. For cycles, for any $n\geq 3$, it holds true that $\svSpan{C_n}=\dvSpan{C_n}=\lfloor\frac{n}{2}\rfloor$ and 
$\cvSpan{C_n}=\begin{cases}
 \lfloor\frac{n}{2}\rfloor, & n \text{ is odd,} \\
     \frac{n}{2}-1, & n \text{ is even.}
\end{cases}$

\begin{proposition}\label{prop:direct-path}
    For $n\geq 2$, $\dcap{}{P_n}=2$. 
\end{proposition}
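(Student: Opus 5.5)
Since $\dvSpan{P_n}=1$, the direct capacity is $\dcap{}{P_n}=\dcap{1}{P_n}$. By Proposition \ref{prop:basic2a} we already have $\dcap{}{P_n}\geq 2$. It therefore suffices to show that no $\ell$-tour $F=\{f_1,\ldots,f_c\}$ on $P_n$ with $m_{P_n}(F)\geq 1$ can have $c\geq 3$. Label the path $v_1,\ldots,v_n$ in order. The key observation is that collision-freeness on a path preserves the linear order of the players, and this is incompatible with every player visiting both endpoints.

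The first step is to show that the order of players along the path is invariant. Collision-freeness means $f_s(i)\neq f_t(i)$ for $s\neq t$ and all $i$. Write $f_s(i)=v_{a_s(i)}$. In a (non-weak) walk on a path, $|a_s(i+1)-a_s(i)|=1$. Suppose $a_s(i)<a_t(i)$ but $a_s(i+1)>a_t(i+1)$. Each index changes by exactly $\pm1$, so the difference $a_t-a_s$ changes by an element of $\{-2,0,2\}$. Going from a positive value to a negative one therefore forces $a_t(i)-a_s(i)=1$ and $a_t(i+1)-a_s(i+1)=-1$. This means the two players swap across the edge $v_{a_s(i)}v_{a_t(i)}$. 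The swap does not violate $m\geq1$, since the distances are $1$ at both stages. So the order is \emph{not} automatically preserved, and I need a different argument. This step is where the main obstacle lies.

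Instead, I would use the endpoint $v_1$ together with parity. Since $P_n$ is bipartite and every $f_s$ is a genuine walk, the parity of $a_s(i)-a_t(i)$ is constant in $i$. Among three players, two of them, say $f_s$ and $f_t$, must have $a_s(0)\equiv a_t(0)\pmod 2$; their indices then differ by an even number at all times. Now suppose such a pair ever swaps order. By the argument above, a swap forces a difference of $1$ before and $-1$ after, which is odd and hence impossible. The difference $a_t-a_s$ is even and nonzero, so it can only move through even values, and reaching $0$ is forbidden by collision-freeness. Hence the relative order of $f_s$ and $f_t$ is fixed for all $i$, say $a_s(i)<a_t(i)$ for every $i$. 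But $f_t$ is a track, so at some time $i$ it occupies $v_1$, giving $a_t(i)=1$. Then $a_s(i)<1$, which is impossible. This is the desired contradiction.

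Hence $c\leq 2$, and combined with the lower bound this gives $\dcap{}{P_n}=2$. For completeness, I would note that the lower bound example is concrete. Two players start on adjacent vertices $v_1,v_2$ and both move right in parallel. When they reach the end, they swap across the last edge and then move back left. Each visits every vertex, and since $n\geq 2$ the distance between them stays at least $1$ throughout. Alternatively, Proposition \ref{prop:basic2a} supplies the lower bound directly.
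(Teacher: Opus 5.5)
Your proof is correct and takes essentially the same route as the paper. Both arguments pigeonhole on the parity of starting positions to get two tracks at even distance, note that an even nonzero difference changing by at most $2$ per step can never change sign, and reach a contradiction when one of the two tracks hits an endpoint (you send the right-hand track to $v_1$, the paper sends the left-hand one to $v_n$). Your explicit sign-change argument fills in a step the paper leaves implicit, and the abandoned order-invariance attempt in your second paragraph can simply be deleted from a final write-up.
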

\begin{proof}
    The claim is obvious for $n=2$.
    Let $n>2$. From Proposition \ref{prop:basic2a} we have $\dcap{}{P_n}\geq 2$. It holds true that $\sigma^\times_V(P_n)=1$. Let us assume that $\dcap{}{P_n}>2$ and denote $k=\dcap{}{P_n}$.    
    Let $F=\{f_1,\ldots,f_k\}$ be a collision-free $\ell$-tour on $P_n$. Since $k\geq 3$ there are at least two $\ell$-tracks, say $f,g\in\{f_1,\ldots,f_k\}$, such that $d(f(0),g(0))$ is even. Let the vertices of $P_n$ be denoted by $1,\ldots,n$ in a natural order. Let $f(0)=i$ and $g(0)=j$, for some $i,j\in\{1,\ldots,n\}$, and without the loss of generality assume that $i<j$. By active movement rules, the distance between $f$ and $g$ in each step will be even. Since $f$ and $g$ are $\ell$-tracks, there exists a $t\in\{1,\ldots,\ell\}$ such that $f(t)=n$, and since $d(f(t),g(t))\geq1$ it follows that $g(t)<f(t)$. That means that there is some $t'\in\{1,\ldots,t-1\}$ such that $f(t')=g(t')$. That is a contradiction with $m_{P_n}(f,g)\geq 1$.
\end{proof}

 The strong capacity of paths equals to $n$, as follows from Proposition \ref{prop:strong-1}, which is stated for a larger class of graphs, and the Cartesian capacity is not observed for paths, since $\sigma^\square_V(P_n)=0$. Therefore, all possible capacity values for paths are proven.

For cycles we have the following results.

\begin{proposition}\label{prop:cyc_sd}  Let $n,d\in\mathbb{N}$ such that $n\geq 3$ and $1\leq d\leq \sigma^\boxtimes_V(C_n)=\sigma^\times_V(C_n)$. 
    It holds true that 
    $$\scap{d}{C_n}= \dcap{d}{C_n}=\left\lfloor\frac{n}{d}\right\rfloor.$$
    
\end{proposition}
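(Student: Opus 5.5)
We establish the two inequalities $\scap{d}{C_n}\le\lfloor n/d\rfloor$ and $\dcap{d}{C_n}\ge\lfloor n/d\rfloor$. Proposition \ref{prop:basic1a} gives $\scap{d}{C_n}\ge\dcap{d}{C_n}$, so together these force equality throughout. Label the vertices of $C_n$ by $\mathbb{Z}_n$ in cyclic order, so that $d_{C_n}(i,j)=\min\{|i-j|,n-|i-j|\}$.

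\emph{Upper bound.} Let $F=\{f_1,\ldots,f_c\}$ be any weak tour with $m_{C_n}(F)\ge d$, and look only at stage $0$. The $c$ vertices $f_1(0),\ldots,f_c(0)$ are pairwise at distance at least $d$. Read them in cyclic order around $C_n$. Each of the $c$ gaps between consecutive occupied vertices, measured along the cycle, is at least the graph distance between those two vertices, hence at least $d$. The gaps sum to $n$, so $cd\le n$ and $c\le\lfloor n/d\rfloor$. For $c=2$ this needs slightly more care, since there are then two arcs joining the same pair of vertices. Both arcs still have length at least the distance between the pair, hence at least $d$, so $2d\le n$ as well. (This also holds directly because $d\le\lfloor n/2\rfloor$.) The bound uses no property of tours, so it applies to every strong $d$-capacity candidate.

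\emph{Lower bound.} Put $c=\lfloor n/d\rfloor$. We construct a rotation march: for $s=1,\ldots,c$ let $f_s(0)=(s-1)d$, and let every player step clockwise at each step, $f_s(i)=(s-1)d+i \bmod n$, for $i=0,\ldots,n-1$. Every $f_s$ is then an $(n-1)$-walk visiting all $n$ vertices, so it is an $(n-1)$-track and $F$ is an $(n-1)$-tour under the active (direct) rules. Since all players shift together, the pairwise distances are the same at every stage as at stage $0$. At stage $0$ the cyclic gaps are $d,\ldots,d$ and a final gap $n-(c-1)d\ge d$. The distance between $f_s(0)$ and $f_t(0)$ equals $\min(\text{arc},\,n-\text{arc})$, and each of the two arcs is a sum of consecutive gaps, so each is at least $d$. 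Hence $m_{C_n}(F)\ge d$.

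\emph{Main obstacle.} The definitions require $m_G(F)=d$ exactly, not merely $m_G(F)\ge d$. In the construction the pair $f_1,f_2$ is at distance $\min(d,n-d)=d$, because $d\le\lfloor n/2\rfloor$. When $c=1$ there is no such pair, but $c=1$ cannot occur since $d\le n/2$. So equality holds and the construction is admissible. Whether $m_G(F)=d$ exactly also matters on the upper-bound side: a tour with $m_G(F)=d$ in particular has $m_G(F)\ge d$, so the argument above already covers it.
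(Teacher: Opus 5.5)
Your proof is correct and follows the same route as the paper. The upper bound counts the cyclic gaps between the starting positions: each gap is at least $d$ and together they sum to $n$. The lower bound uses the same rotation tour $f_j(t)=t+(j-1)d \bmod n$, and you are more careful than the paper in checking that $m_{C_n}(F)=d$ holds exactly and that $\lfloor n/d\rfloor\ge 2$.
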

\begin{proof} Firstly, we will show that $\scap{d}{C_n} \leq\left\lfloor\frac{n}{d}\right\rfloor$, bounding the strong and direct capacity, due to Proposition \ref{prop:basic1a}. 
    Towards a contradiction, suppose $\scap{d}{C_n}=k \geq \left\lfloor\frac{n}{d}\right\rfloor+1$. Without loss of generality let us denote the vertices of $C_n$ by $0,\ldots,n-1$ in a natural order and let us observe a weak $\ell$-tour $F=\{f_1,\ldots,f_k\}$ such that  $m_{C_n}(F)=d$ and such that $f_1(0)=0$, $f_i(0)\leq f_j(0)$, for all applicable $i<j$. Since the distance between any two weak $\ell$-tracks of $F$ is at least $d$, we have at least $kd$ edges in the graph $C_n$. Hence
    $$kd \geq \left(\left\lfloor\frac{n}{d}\right\rfloor+1\right) d >n,$$ so we arrive at the conclusion that $C_n$ has more than $n$ edges, which is a contradiction.
    
    Next, for $k=\left\lfloor\frac{n}{d}\right\rfloor$, we construct an $(n-1)$-tour $F=\{f_1,...,f_k\}$, with $m_{C_n}(F)=d$, consequently proving
    the stated direct capacity value.
    
    For each $t\in\{0,\ldots,n-1\}$, $j\in\{1,\ldots,k\}$ let
    $$f_j(t)=t+(j-1)d \pmod n.$$
      It is easy to see that $F=\{f_1,\ldots,f_k\}$ is an $(n-1)$-tour, and that $m_{C_n}(F)= d$.
\end{proof}

\begin{proposition}\label{prop:cyc_c}  Let $n,d\in\mathbb{N}$ be such that $n\geq 3$ and $1\leq d\leq \sigma^\square_V(C_n)$. 
    It holds true that 
    $$\ccap{d}{C_n} =\left\{
	\begin{array}{ll}
		\left\lfloor\frac{n}{d}\right\rfloor,  & n \not\equiv 0 \pmod d \\
        \\
		\frac{n}{d} -1, & n \equiv 0 \pmod d 
	\end{array}
\right.$$
    
\end{proposition}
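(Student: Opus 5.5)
The upper bound comes from the strong capacity of cycles, sharpened by a gap-counting argument when $d$ divides $n$. The lower bound comes from a ``rotating'' patient march.

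\textbf{Upper bound.} By Proposition~\ref{prop:basic1b} and Proposition~\ref{prop:cyc_sd} we have $\ccap{d}{C_n}\le \scap{d}{C_n}=\lfloor n/d\rfloor$. This settles the case $n\not\equiv 0 \pmod d$.

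Now suppose $d\mid n$, and assume towards a contradiction that there is a patient $\ell$-tour $F=\{f_1,\dots,f_k\}$ with $k=n/d$ and $m_{C_n}(F)=d$.
\begin{itemize}
\item Label the vertices $0,\dots,n-1$ around the cycle. At any stage, the $k$ occupied vertices split the cycle into $k$ cyclic gaps whose lengths sum to $n$.
\item Each gap has length at least $d$, since the cycle distance between consecutive players is at most their gap. As $kd=n$, every gap equals exactly $d$.
\item Since $n\ge 3$ and each $f_i$ is surjective, $\ell\ge 1$. So some step of $F$ moves exactly one player by one edge, with all others staying put.
\item That move shortens one of the two gaps adjacent to the moving player to $d-1$. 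Those two players are then at distance at most $d-1$, contradicting $m_{C_n}(F)=d$.
\end{itemize}
We also note $k\ge 2$, because $d\le \sigma^\square_V(C_n)\le \lfloor n/2\rfloor$. Hence $\ccap{d}{C_n}\le n/d-1$ in this case.

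\textbf{Lower bound (construction).} Let $k=\lfloor n/d\rfloor$ if $d\nmid n$, and $k=n/d-1$ if $d\mid n$. Write $n=kd+r$; then $1\le r\le d$ in both cases. We also check $k\ge 2$ from $d\le\sigma^\square_V(C_n)$:
\begin{itemize}
\item if $n$ is odd, then $d\le (n-1)/2$;
\item if $n$ is even, then $d\le n/2-1$, so in the divisible case $n/d\ge 3$.
\end{itemize}

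Start with $f_j(0)=(j-1)d$ for $j=1,\dots,k$. The gaps are then $d,\dots,d$ and a last gap $d+r>d$.

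A \emph{round} consists of $k$ patient steps: first player $k$ advances by one, then player $k-1$, and so on down to player $1$. After player $k$ moves, the last gap drops to $d+r-1\ge d$ and the gap behind player $k$ becomes $d+1$. Each subsequent move shifts the surplus of $1$ one gap backwards, so every gap stays at least $d$ throughout. After the round, all players have advanced by one vertex and the configuration is a rotation of the initial one.

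Performing $n$ rounds gives a patient tour in which every player visits every vertex.

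To see the distance condition, note that each gap is at least $d$. Also, $n$ minus any single gap is at least $d$, because $k\ge 2$ and the other gaps are each at least $d$. Hence all pairwise cycle distances are at least $d$. Equality is attained at stage $0$ by $f_1$ and $f_2$, so $m_{C_n}(F)=d$ exactly.

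\textbf{Main obstacle.} The step needing the most care is the divisible-case upper bound: that exact packing forces all gaps to equal $d$, so that any single-player move breaks the bound. The construction is routine once one checks that the slack $r\ge 1$ keeps every gap at least $d$ during a round.
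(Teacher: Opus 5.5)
Your proof is correct and follows essentially the same route as the paper. The upper bound comes from Propositions~\ref{prop:basic1b} and~\ref{prop:cyc_sd}, together with the observation that when $d \mid n$ all neighbouring tracks must sit at distance exactly $d$, so any single patient move creates distance $d-1$. The lower bound uses the same rotating patient march: the paper's $f_j(t)=(j-1)d+\lfloor (t+j-1)/k\rfloor$ moves players $k,k-1,\dots,1$ in turn exactly as your rounds do, using $n-1$ rounds instead of $n$. Your gap-counting usefully makes explicit what the paper leaves implicit. That includes the justification of its ``without loss of generality $f_j(0)=(j-1)d$'', the check that the slack $r\ge 1$ keeps every gap at least $d$ throughout a round, and the fact that $k\ge 2$.
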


\begin{proof}
    
From Propositions \ref{prop:basic1b} and \ref{prop:cyc_sd} we know that $\ccap{d}{C_n}\leq \scap{d}{C_n}=\left\lfloor\frac{n}{d}\right\rfloor$. 
First we prove that $\ccap{d}{C_n}\neq\left\lfloor\frac{n}{d}\right\rfloor$ when $n \equiv 0 \pmod d$. Let the vertices of $C_n$ be denoted by $0,...,n-1$ in a natural way. Assume the contrary, and let $F=\{f_1,\ldots,f_k\}$, $k=\left\lfloor\frac{n}{d}\right\rfloor=\frac{n}{d}$ be a patient $\ell$-tour in $C_n$ such that $m_{C_n}(F)= d$. Without loss of generality assume $f_j(0)=(j-1)d$, for all $j\in\{1,\ldots,k\}$. 
    
    Let us say that any two patient $\ell$-tracks on the distance $d$ in the starting position, are neighbouring $\ell$-tracks. It is clear that for $j\in\{1,\ldots,k\}$, for which $f_j(1)\neq f_j(0)$, the distance to one of its neighbouring $\ell$-tracks is $d-1$, which is a contradiction.\\

Let us now construct a patient $(n-1)k$-tour that proves the claim.
Let $$k=\left\{
	\begin{array}{ll}
		\left\lfloor\frac{n}{d}\right\rfloor,  & n \not\equiv 0 \pmod d \\
        \\
		\frac{n}{d} -1, & n \equiv 0 \pmod d. 
	\end{array}
\right. $$
We define
         
    $$f_j(t)=(j-1)d+\left\lfloor\frac{t+j-1}{k}\right\rfloor \pmod n,$$  for all $j\in\{1,\ldots,k\}$,  $t\in\{0,\ldots,(n-1)k\}$. Now $F=\{f_1,\ldots,f_k\}$ is a patient $(n-1)k$-tour, and $m_{C_n}(F)=d$.\\
    
    The claim for Cartesian capacity value now follows.
\end{proof} 

\subsection{Bipartite graphs}

Bipartite graphs are a large class of graphs and their span values vary. Some results on selected  families of bipartite graphs can be found in \cite{Erceg23, SuVo24}. In this section we present some results on $2$-capacities of bipartite graphs, results on $1$-capacities are presented in Section \ref{sec:topfull}. 

\begin{proposition}\label{prop:bipartite-Hamiltonian}
Let $G$ be a Hamiltonian bipartite graph $G=(X,Y, E)$ with $n$ vertices and $|X|=|Y|$. It holds true that $\dcap{2}{G} =\scap{2}{G} =|X|=\frac{n}{2}$.
\end{proposition}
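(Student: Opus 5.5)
Since Proposition~\ref{prop:basic1a} gives $\scap{2}{G}\geq \dcap{2}{G}$, it suffices to prove the lower bound $\dcap{2}{G}\geq n/2$ by a construction and the upper bound $\scap{2}{G}\leq n/2$ by a counting argument. First we should check that $d=2$ is admissible, i.e.\ $\sigma^\times_V(G)\geq 2$. This will follow from the construction itself: it produces at least two tracks at distance at least $2$, provided $n\geq 4$. The degenerate case $n=2$, $G=K_2$, must be excluded or treated separately, since there $d=2$ exceeds the span.

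\textbf{Construction.} Let $C: v_0,v_1,\ldots,v_{n-1},v_0$ be a Hamiltonian cycle of $G$. Because $G$ is bipartite, the vertices alternate between $X$ and $Y$ along $C$, so $n$ is even. Define, in the spirit of the proof of Proposition~\ref{prop:cyc_sd},
\[
f_j(t)=v_{t+2(j-1) \bmod n}, \qquad j\in\{1,\ldots,n/2\},\ t\in\{0,\ldots,n-1\}.
\]
Each $f_j$ is a walk along $C$ visiting all vertices, so $F$ is an $(n-1)$-tour. At every time $t$, the occupied set $F(t)$ consists of every other vertex of $C$. Since all the $f_j$ start in the same class and move in lockstep, $F(t)$ is exactly one bipartition class, $X$ or $Y$, alternating with $t$. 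Two distinct vertices of the same class are at even distance, hence at distance at least $2$, so $m_G(F)\geq 2$. Equality holds because $v_0$ and $v_2$ are at distance exactly $2$ via $v_1$. This gives $\dcap{2}{G}\geq n/2$, and it also shows $\sigma^\times_V(G)\geq 2$.

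\textbf{Upper bound.} Let $F=\{f_1,\ldots,f_c\}$ be a weak tour with $m_G(F)\geq 2$. At stage $0$ the vertices $f_1(0),\ldots,f_c(0)$ are pairwise non-adjacent, so they form an independent set. Bounding $c$ by the independence number is not directly enough, because a Hamiltonian bipartite graph with $|X|=|Y|$ still has $\alpha(G)=n/2$; we need a sharper argument. The cleanest route is to use the Hamiltonian cycle $C$ together with the pairwise distance condition, mirroring the edge count in Proposition~\ref{prop:cyc_sd}. Partition $V(G)$ into the $n/2$ edges $v_{2i}v_{2i+1}$ of $C$, which form a perfect matching. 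Two players occupying the two endpoints of a matching edge would be at distance $1$, so each matching edge contains at most one player at any stage. Hence $c\leq n/2$.

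The main obstacle is only the admissibility of $d=2$ and the small case $n=2$; the counting step is immediate once the perfect matching from the Hamiltonian cycle is used.
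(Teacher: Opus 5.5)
Your proof is correct, and your construction is exactly the paper's: players start on every other vertex of a Hamiltonian cycle and all advance one step along it at each time. The paper's proof stops after exhibiting this march. It calls the march ``weak'', although, as you note, every player moves at every step, so it is a genuine $(n-1)$-tour, which is what the lower bound on $\dcap{2}{G}$ actually requires. The paper neither checks that $d=2$ is admissible nor proves $\scap{2}{G}\leq n/2$. Your argument supplies both. The tracks $f_1,f_2$ witness $\sigma^{\times}_V(G)\geq 2$, and the perfect matching $\{v_{2i}v_{2i+1}\}$ taken from the Hamiltonian cycle caps the number of players at $n/2$, so your write-up is the more complete one.

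Two small remarks. First, the case $n=2$ is vacuous rather than exceptional: a Hamiltonian cycle in a simple graph has at least three vertices, and bipartiteness then forces $n\geq 4$. Second, your aside that the independence number ``is not directly enough'' is backwards. Since $\alpha(G)=n/2$, the bound $c\leq\alpha(G)$ is exactly what you need, and your matching argument is precisely the proof that $\alpha(G)\leq n/2$.
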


\begin{proof}
Let $x_1 \ldots x_n x_1$ be a Hamiltonian cycle in $G$ and let $f_i(0)=x_{2i-1}$ for each $i\in \{1,\ldots,|X|\}$. For all $i\in \{1,\ldots,|X|\}$ and all $t \in \{0,\ldots,n-1\}$ we inductively put $$f_i(t)=x_k \implies f_i(t+1)=
\begin{cases}
    x_{k+1},  & \mbox{if } k \neq n, \\
		x_1,  & \mbox{if } k=n.
\end{cases}
$$ 

It is easy to see that $F=\{f_1,\ldots,f_{|X|}\}$ is an weak $(n-1)$-tour on $G$, such that $m_G(F) =2$. 
\end{proof}

\begin{proposition}\label{prop:bipartite-direct-2}
   Let $G$ be a connected bipartite graph $G=(X,Y,E)$, such that $2\leq |X| \leq |Y|$. It holds true that $\dcap{2}{G}\leq|X|$. If $G$ is a complete bipartite graph, then $$\dcap{}{G}=\scap{}{G}=|X|.$$
\end{proposition}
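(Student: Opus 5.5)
The plan is a parity argument. In the direct setting every player moves at every step, so in a bipartite graph each player alternates between $X$ and $Y$. The condition $m_G(F)\ge 2$ forces a collision-free configuration in which no two players are ever adjacent. The bound $|X|$ should then come from comparing two consecutive stages.

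\emph{Upper bound $\dcap{2}{G}\le |X|$.} Let $F=\{f_1,\ldots,f_c\}$ be an $\ell$-tour with $m_G(F)\ge 2$. Since each $f_i$ is surjective and $|V(G)|\ge 4$, we have $\ell\ge 1$. Split the players into $A=\{i : f_i(0)\in X\}$ and $B=\{i : f_i(0)\in Y\}$. By the active movement rules, $f_i(1)\in Y$ for $i\in A$ and $f_i(1)\in X$ for $i\in B$.

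\begin{itemize}
\item The sets $\{f_i(0): i\in A\}$ and $\{f_j(1): j\in B\}$ are both subsets of $X$, and each consists of $|A|$, respectively $|B|$, distinct vertices, because $m_G(F)\ge 1$.
\item These two sets are disjoint. Suppose $f_j(1)=f_i(0)=x$ with $i\in A$ and $j\in B$. Then $f_i(1)$ is a neighbour of $x$, so $d_G(f_i(1),f_j(1))=1<2$, a contradiction.
\end{itemize}

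Hence $|A|+|B|\le |X|$, that is, $c\le |X|$. The key observation, and the only non-obvious step, is comparing stage $0$ of the $A$-players with stage $1$ of the $B$-players rather than the same stage. Comparing the same stage only gives $c\le |X|+|Y|$-type bounds.

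\emph{Complete bipartite case $G=K_{m,n}$ with $2\le m=|X|\le n=|Y|$.} First, $\sigma^\times_V(G)=\sigma^\boxtimes_V(G)=2$. The diameter of $G$ is $2$, so no span exceeds $2$. A direct $2$-tour for two players is a special case of the construction below, so the span is at least $2$. Thus $\dcap{}{G}=\dcap{2}{G}$ and $\scap{}{G}=\scap{2}{G}$.

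For the strong upper bound, note that in $K_{m,n}$ two players at distance at least $2$ must lie on the same side, since any two vertices on opposite sides are adjacent. Therefore at every stage all players lie in $X$ or all lie in $Y$. Since some player visits a vertex of $X$, at that stage all $c$ players occupy distinct vertices of $X$, and so $\scap{}{G}\le |X|$.

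For the lower bound we construct the tour directly, with indices taken mod $m$ on $X=\{x_0,\ldots,x_{m-1}\}$ and mod $n$ on $Y=\{y_0,\ldots,y_{n-1}\}$. For $i\in\{0,\ldots,m-1\}$ put
$$f_i(2s)=x_{i+s \bmod m},\qquad f_i(2s+1)=y_{i+s \bmod n}.$$
The time horizon $t\in\{0,\ldots,2mn\}$ is enough for every player to visit all of $X$ and $Y$.
\begin{itemize}
\item Consecutive vertices lie on opposite sides and $G$ is complete bipartite, so each $f_i$ is a walk.
\item At every stage all players are on the same side. They occupy distinct vertices there, because $i\mapsto i+s$ is injective mod $m$, and injective mod $n$ since $m\le n$.
\item Hence all pairwise distances are exactly $2$.
\end{itemize}
So $\dcap{}{G}\ge |X|$. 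Combining this with the strong upper bound and Proposition~\ref{prop:basic1a} gives $|X|\le\dcap{}{G}\le\scap{}{G}\le |X|$, so $\dcap{}{G}=\scap{}{G}=|X|$.
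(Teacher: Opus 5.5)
Your proposal is correct and follows essentially the paper's argument. For the upper bound you use the same comparison: the $X$-occupants at one stage against the $X$-positions, one step later, of the players that were in $Y$. For the complete bipartite case you use the same cyclic-shift construction; your horizon $2|X||Y|$ is longer than needed, since $2|Y|-1$ suffices. Your explicit argument that all players must lie on the same side, which gives $\scap{}{G}\le |X|$, usefully spells out a step the paper only calls easy to see.
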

\begin{proof}
Let $G$ be a connected bipartite graph $G=(X,Y)$, such that $2\leq|X| \leq |Y|$. Towards a contradiction, assume $\dcap{2}{G}>|X|$. For the purpose of this proof, let us denote $\dcap{2}{G}=k$. Let $F=\{f_1,\ldots,f_k\}$ be an $\ell$-tour on $G$ such that $m_G(F)\geq 2$. For some $t\in\{0,...,\ell\}$ let there be $x$ $\ell$-tracks in vertices of $X$ and the rest in vertices of $Y$, namely, $X_1=\{f(t)\in X:f\in F\}\subseteq X$, $|X_1|=x$ and $Y_1=\{f(t)\in Y:f\in F\}\subseteq Y$, $|Y_1|=y$. Obviously $x+y=k>|X|$, so the set $Y_1$ is non-empty, hence $y\geq 1$. Since distance between any two $\ell$-tracks at any point is at least $2$, we may conclude that no vertex from $Y_1$ is adjacent to any in $X_1$, $N(Y_1)\cap X_1=\emptyset$. This means that $N(Y_1)\subseteq X\setminus X_1$, and $|N(Y_1)|\leq |X|-x$. However, due to the active movement rules, for all the $\ell$-tracks in the set $\{f\in F:f(t)\in Y\}$, $f(t+1)$ is in $X$, more precisely, it is in $N(Y_1)$. Since $|N(Y_1)|\leq |X|-x<y$, there will have to exist some $f,g\in F$ such that $f(t+1)=g(t+1)$, which is a contradiction with the assumption of the distance at least $2$ between each two $\ell$-tracks.\\

For the other part of the claim, let $G=K_{r,s}$ be a complete bipartite graph and $2\leq r \leq s$. It is easy to see that $\scap{}{G} =\scap{2}{G} \leq r$, since the graph is complete and $d=\sigma_V^{\boxtimes}(G)=2$. Let us construct a $(2s-1)$-tour $F=\{f_1,\ldots,f_r\}$ such that $m_{G}(F)= 2$ . Denote vertices in $X$ by $x_0,\ldots,x_{r-1}$ and vertices in $Y$ by $y_0,\ldots,y_{s-1}$. For each $j\in\{1,\ldots,r\}$ and each $t\in\{0,\ldots,2s-1\}$ we define

$$f_j(t)=\left\{
	\begin{array}{ll}
		x_{j-1+\frac{t}{2} \pmod r}  & \mbox{, for even } t \\
		y_{j-1+\frac{t-1}{2} \pmod s} & \mbox{, for odd } t
	\end{array}
    \right.$$

Functions defined in this way have the desired properties, they traverse all of the vertices and $m_{G}(F)= 2$, so $\dcap{}{G}=r=|X|$. 

The claim $cap^{\boxtimes}(G)=|X|$ now follows from Proposition \ref{prop:basic1a}.
\end{proof}

\begin{remark}
    For a complete bipartite graph $G=(X,Y)$, $2\leq |X| \leq |Y|$, it holds true that $\sigma_V^{\square}(G)=1$ and for its Cartesian capacity we have $\ccap{}{G}=n-1$, as proved in Proposition \ref{thm:cartesian-topfull}.
\end{remark}

\subsection{Topfull graphs}\label{sec:topfull}

In this section we characterise graphs in which capacity reaches its theoretical maximum. Such graphs are called topfull and defined as follows.

\begin{definition}
We say that a connected graph $G$ on $n$ vertices is 
\begin{enumerate}[(i)]
    \item \emph{strong-topfull} if $\scap{1}{G}=n$, 
    \item \emph{direct-topfull} if $\dcap{1}{G}=n$, and
    \item \emph{Cartesian-topfull} if $\ccap{1}{G}=n-1$.
\end{enumerate}
\end{definition}

Note that if $G$ is a connected graph on $n$ vertices, then by Definition \ref{def:patient-march} $\ccap{1}{G}<n$. Hence, the definition of Cartesian-topfull above.

\begin{proposition}\label{prop:strong-1}
    Any connected graph $G$ is strong-topfull.
\end{proposition}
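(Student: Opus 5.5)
Since a collision-free weak march has pairwise distinct positions at every stage, at most $n$ players fit on $G$. So $\scap{1}{G}\le n$, and we only need to construct a weak tour $F=\{f_1,\ldots,f_n\}$ with $m_G(F)=1$. (We take $G$ non-trivial, so that $\sigma^\boxtimes_V(G)\ge 1$ and $\scap{1}{G}$ is defined, as noted after Definition~\ref{def:Cartesian-capacity}.) The plan is to keep every vertex occupied at every stage and move players only by \emph{swaps} along edges.

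\textbf{Swaps.} Let $uv\in E(G)$ and let $F(i)=V(G)$. Define stage $i+1$ by letting the players at $u$ and $v$ exchange positions while everyone else stays put. This is a legal step for weak walks: the two swapping players each traverse an edge, and the others repeat their vertex. Because $m_G$ only compares positions at the same time index, the swap causes no collision, and stage $i+1$ is again all of $V(G)$. So each swap is a collision-free weak $1$-march.

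\textbf{Building the tour.} Start with $f_s(0)=v_s$ for an enumeration $v_1,\ldots,v_n$ of $V(G)$. For a given player $s$ and a target vertex $w$, pick a path $P$ in $G$ from the current position of $s$ to $w$; it exists since $G$ is connected. Swap $s$ successively with the occupant of the next vertex of $P$. After $|E(P)|$ swaps, player $s$ has walked along $P$ to $w$, and all stages still equal $V(G)$. Call this march $M(s,w)$.

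Now concatenate, via Definition~\ref{def:extension}, the marches $M(s,w)$ for every $s\in\{1,\ldots,n\}$ and every $w\in V(G)$, in any order. Each piece starts where the previous one ends, since every stage is all of $V(G)$; we re-label players through the indices $k_s$ of the extension. By Remark~\ref{rem:extension-cf} the resulting march is collision-free. Every player visits every vertex during its own pieces, so each $f_s$ is a weak track and $F$ is a weak tour with $n$ players.

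\textbf{Distance exactly $1$.} Since $n\ge 2$ and $G$ is connected, stage $0$ already contains two adjacent occupied vertices. Hence $m_G(F)\le 1$, and together with collision-freeness this gives $m_G(F)=1$ and $\scap{1}{G}=n$.

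The only delicate point, which I expect to be the main obstacle, is the bookkeeping in the extension. The index map $k_s$ of Definition~\ref{def:extension} must be applied consistently, so that "player $s$" genuinely denotes the same walk throughout the concatenation. This is harmless because every stage is a bijection between players and vertices, so the player occupying a given vertex is always uniquely determined.
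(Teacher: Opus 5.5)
Your proposal is correct and follows essentially the paper's argument: the paper likewise keeps every vertex occupied and lets a designated player move while swapping with whoever occupies its next vertex, in $n$ successive phases concatenated via Definition~\ref{def:extension} and Remark~\ref{rem:extension-cf}. The only cosmetic difference is that the paper gives each player one spanning walk per phase, whereas you use one path per target vertex; your explicit checks that $\scap{1}{G}\le n$ and that $m_G(F)$ equals exactly $1$ are correct and are points the paper leaves implicit.
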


\begin{proof}
   Suppose $G$ is a connected graph on $n$ vertices, say $v_1,\ldots,v_n$. We need to show that we can construct a collision-free weak $\ell$-tour $F=\{f_1,\ldots,f_n\}$ on $G$. 

   Let $f^1_i(0)=v_i$, for each $i\in\{1, 2, \ldots, n\}$, and let $f^1_1:\{0, 1, \ldots, \ell_1\}\rightarrow V(G)$ be an arbitrary $\ell_1$-track on $G$ such that $f^1_1(0)=v_1$. Since $G$ is connected, such a track (i.e. a walk through all vertices) clearly exists. For each $t\in\{1,\ldots,\ell_1\}$ and each $j\in\{2,\ldots,n\}$ define
   \[
    f^1_j(t) = \begin{cases}
                f^1_1(t-1), & \text{if } f^1_1(t) = f^1_j(t-1) \\
                f^1_j(t-1), & \text{otherwise}
             \end{cases}\text{.}
   \]
    It follows that $F^1=\{f^1_1, \ldots, f^1_n\}$ is a collision-free weak $\ell_1$-march on $G$ such that $f^1_1$ is an $\ell_1$-track.    

    We define the rest inductively. Assume $1<k\leq n$ and that $F^{k-1}$ is a collision-free weak $\ell_{k-1}$-march on $G$ such that $f^{k-1}_{k-1}$ is an $\ell_{k-1}$-track on $G$.
    Let $f^k_k:\{0, 1, \ldots, \ell_k\}\rightarrow V(G)$ be an arbitrary $\ell_k$-track on $G$ such that $f^k_k(0)=f^{k-1}_k(\ell_{k-1})$, and let $f^k_i(0)=f^{k-1}_i(\ell_{k-1})$, for all $i\in\{1,\ldots,n\}$. For each $t\in\{1,\ldots,\ell_k\}$ and each $j\in\{1,\ldots,k-1,k+1,\ldots,n\}$ define
   \[
    f^k_j(t) = \begin{cases}
                f^k_k(t-1), & \text{if } f^k_k(t) = f^k_j(t-1) \\
                f^k_j(t-1), & \text{otherwise}
             \end{cases}\text{.}
   \]
   Again, $F^k=\{f^k_1, \ldots, f^k_n\}$ is a collision-free weak $\ell_k$-march on $G$ such that $f^k_k$ is an $\ell_k$-track.

   It follows that $F:=F^n F^{n-1} \ldots F^1 = \{f_1, f_2, \ldots, f_n\}$ is a collision-free weak $\ell$-march on $G$, where $\ell=\ell_1+\ell_2+\ldots +\ell_n$. Moreover, $f_i$ is a track on $G$, for each $i\in \{1,\ldots,n\}$. Hence, $F$ is a collision-free weak $\ell$-tour on $G$.
\end{proof}

\begin{proposition}
    Let $G$ be a connected graph with $n\geq 3$ vertices. If $G$ has a leaf then $G$ is not direct-topfull. 
\end{proposition}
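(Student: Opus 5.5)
Suppose, towards a contradiction, that $G$ is direct-topfull. Then there is an $\ell$-tour $F=\{f_1,\ldots,f_n\}$ on $G$ with $m_G(F)\geq 1$. Here $\ell\geq 1$, since each $f_i$ must visit $n\geq 3$ vertices.

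The first step is to observe that every stage of $F$ occupies all of $V(G)$. The $n$ walks are pairwise distinct at each time, so $F(t)=V(G)$ for every $t$. Moreover, under active movement each $f_i$ moves along an edge at every step. Hence the map $\pi_t: f_i(t)\mapsto f_i(t+1)$ is a bijection $V(G)\to V(G)$ with $\pi_t(v)\in N(v)$ for every $v$. In other words, each step is a permutation of $V(G)$ without fixed points in which every vertex is sent to a neighbour.

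Next, consider a leaf $u$ with unique neighbour $w$. Since $\pi_t(u)\in N(u)=\{w\}$, we get $\pi_t(u)=w$. The vertex $u$ must also have a preimage under $\pi_t$, and the only vertex adjacent to $u$ is $w$, so $\pi_t(w)=u$. Thus at every step the player on $u$ and the player on $w$ swap, and so for every $t$ the pair $\{f_i(t),f_j(t)\}$ of players on $\{u,w\}$ stays the same pair $\{f_i,f_j\}$. By induction on $t$, the two players who start on $u$ and $w$ occupy exactly $\{u,w\}$ for all times and never leave it.

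To conclude, note that since $n\geq 3$ there is a vertex $x\notin\{u,w\}$. Such a vertex is never visited by $f_i$, contradicting that $f_i$ is an $\ell$-track. Therefore $\dcap{1}{G}<n$ and $G$ is not direct-topfull.

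The only delicate point is the bijection argument forcing $\pi_t(w)=u$, which relies on every vertex being occupied at every stage. Everything else is immediate.
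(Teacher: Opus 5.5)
Your proof is correct and follows essentially the same argument as the paper. The player on the leaf is forced onto its unique neighbour, the only player who can refill the leaf is the one on that neighbour, so the two swap forever and never reach a third vertex. Your explicit observation that every stage occupies all of $V(G)$, so that each step is a fixed-point-free permutation along edges, makes precise a step the paper uses only implicitly when it asserts that some player must move onto the leaf.
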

\begin{proof}
    Let $v$ be a leaf in $G$ and $u$ its only neighbour. Towards a contradiction, assume $G$ is direct-topfull. Hence there exists a collision-free $\ell$-tour $F=\{f_1,\ldots,f_n\}$ on $G$. Let $i$ and $j$ be such that $f_i(0)=v$ and $f_j(0)=u$. Since $f_i$ is an $\ell$-track, it follows that $f_i(1)=u$. Since $F$ is collision-free it follows that $f_k(1)=v$, for some $k\in\{1,\ldots,n\}$. Moreover, $u$ is the only neighbour of $v$ and $f_j(0)=u$, therefore $k=j$. Similarly, it follows that $f_i(t) = f_j(t-1)$ and $f_j(t) = f_i(t-1)$ for every $t\in\{2,\ldots, \ell\}$. Hence, $\orb(F,f_i(0))=\{u,v\}\not= V(G)$, a contradiction with the assumption that $F$ is a tour.
\end{proof}

\begin{definition}
A vertex cycle/edge cover of a graph $G$ is a set $S$ of subgraphs of $G$ such that each subgraph in $S$ is either a cycle or isomorphic to $K_2$ and every vertex of $G$ belongs to a subgraph in $S$. If any two distinct subgraphs in $S$ have no vertices in common, then $S$ is called a vertex disjoint cycle/edge cover of $G$. 
\end{definition}

Note, if $S$ is a vertex-disjoint cycle/edge cover of a graph $G$, the union of the subgraphs in $S$ forms a spanning subgraph originally defined by Tutte as a $Q$-factor \cite{tutte1953}. 

\begin{lemma}\label{lem:cover-track}
Let $G$ be a connected graph on $n$ vertices such that $G$ has a vertex disjoint cycle/edge cover. For every natural number $\ell$ there exists a collision-free $\ell$-march $F=\{f_1, \ldots, f_n\}$ on $G$.
\end{lemma}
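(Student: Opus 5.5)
Fix a vertex disjoint cycle/edge cover $S=\{H_1,\ldots,H_m\}$ of $G$. Because the subgraphs in $S$ are pairwise vertex disjoint and together cover every vertex, $V(G)$ is the disjoint union $V(H_1)\cup\cdots\cup V(H_m)$. The plan is to define, on each $H_r$ separately, a rotation $\phi_r:V(H_r)\to V(H_r)$ that is a bijection sending every vertex to a neighbour in $H_r$, and hence in $G$. Let $\phi:V(G)\to V(G)$ be the union of these maps. Then $\phi$ is a permutation of $V(G)$ with $v\phi(v)\in E(G)$ for every $v$. Once $\phi$ is in hand, define $f_v(t)=\phi^t(v)$ for each $v\in V(G)$ and $t\in\{0,\ldots,\ell\}$, and set $F=\{f_v \mid v\in V(G)\}$. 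This family has $n$ members.

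The choice of $\phi_r$ depends on the type of $H_r$. If $H_r\cong K_2$ with vertices $a,b$, put $\phi_r(a)=b$ and $\phi_r(b)=a$, so the two players swap along the edge $ab$. If $H_r$ is a cycle $u_0u_1\cdots u_{k-1}u_0$ with $k\ge 3$, put $\phi_r(u_i)=u_{i+1 \bmod k}$, so every player moves one step around the cycle. In both cases $\phi_r$ is a bijection of $V(H_r)$ and each image is adjacent to its preimage, which gives the two properties of $\phi$ claimed above.

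The verification then has two parts. First, each $f_v$ is a walk, since $f_v(t)f_v(t+1)=\phi^t(v)\phi(\phi^t(v))$ is an edge of $G$. So $F$ is an $\ell$-march in which every player moves at every step, as direct (active) movement requires. Second, $F$ is collision-free. Powers of a permutation are permutations, so $\phi^t$ is injective for each $t$. Hence for $v\neq w$ we get $f_v(t)\neq f_w(t)$, that is $d_G(f_v(t),f_w(t))\ge 1$ at every time $t$, which gives $m_G(F)\ge 1$.

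I expect no genuine obstacle here. The only points needing care are that the swap along an edge $K_2$ is allowed even though the two players pass each other, which is fine because the distance is only checked at integer stages, and the bookkeeping that $F$ contains exactly $n$ distinct walks. The latter holds because the starting positions $f_v(0)=v$ are pairwise distinct.
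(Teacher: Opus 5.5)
Your proof is correct and matches the paper's construction: at every step the paper swaps the two players on each $K_2$ component and shifts the players one position along each cycle component, and collision-freeness follows from the vertex-disjointness of the cover. Writing this as iterates of a single permutation $\phi$, with $f_v(t)=\phi^t(v)$, is just a tidier way of stating the paper's step-by-step definition.
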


\begin{proof}
Let $\ell$ be arbitrary, $V(G)=\{v_1, \ldots, v_n\}$ and $S=\{S_1, \ldots, S_k\}$ be a vertex disjoint cycle/edge cover of $G$. For each $i\in \{1, 2, \ldots, n\}$ we define $f_i : \{0, 1, \ldots, \ell\}\rightarrow V(G)$ as follows. First, $f_i(0)=v_i$, for each $i\in\{1, 2, \ldots, n\}$. Let $0<t\leq \ell$ and assume that $f_i(t-1)$, for each $i\in\mathbb{N}$, are well-defined. Since $S$ is a vertex disjoint cycle/edge cover of $G$, for every $i\in\{1, 2, \ldots, n\}$ there exists exactly one $j \in \{1, 2, \ldots, k\}$ such that $f_i(t-1) \in S_j$. Depending on the structure of $S_j$ we define $f_i(t)$, for all $i\in\{1, 2, \ldots, n\}$.

If $S_j$ is isomorphic to $K_2$, say with vertices $f_{i_1}(t-1)$ and $f_{i_2}(t-1)$, define $f_{i_1}(t) = f_{i_2}(t-1)$ and $f_{i_2}(t) = f_{i_1}(t-1)$. 

If $S_j$ is a cycle of length $c$, say $f_{i_1}(t-1) f_{i_2}(t-1) \ldots f_{i_c}(t-1) f_{i_1}(t-1)$, define $f_{i_1}(t) = f_{i_2}(t-1), f_{i_2}(t) = f_{i_3}(t-1),\ldots, f_{i_{c-1}}(t) = f_{i_c}(t-1), f_{i_c}(t) = f_{i_1}(t-1)$. 

Since $S$ is a vertex cover all $f_i(t)$ have been defined. 

From our definition of $f_i$'s, it is clear that for any $i\in \{1, 2, \ldots, n\}$ and any $t\in \{1, 2, \ldots, \ell-1\}$ it holds true that $f_i(t)f_i(t+1) \in E(G)$. The fact that $f_i(t) \not= f_{i'}(t)$, for each $t \in \{0, 1, \ldots, \ell\}$ and for each two distinct $i, i' \in \{1, 2, \ldots, n\}$ follows from the fact that $S$ is a vertex-disjoint cycle/edge cover. Therefore $F:=\{f_1, \ldots, f_n\}$ is a collision-free $\ell$-march on $G$.
\end{proof}

For a connected graph $G$ and a vertex disjoint cycle/edge cover of $G$, say $S$, the march obtained by Lemma \ref{lem:cover-track} is called \emph{the march induced by $S$}.

\begin{lemma}\label{lemma:fs-give-ce-cover}
Let $G$ be a connected graph on $n$ vertices. If there exists a collision-free $\ell$-march  $F=\{f_1, \ldots, f_n\}$ on $G$, for some natural number $\ell$, then there exists a vertex-disjoint cycle/edge cover of $G$.
\end{lemma}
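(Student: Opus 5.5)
Only the first step of the march is needed. Since $\ell\geq 1$, I will look at the transition from stage $0$ to stage $1$. With $n$ players on $n$ vertices and $F$ collision-free, each stage $F(0)$ and $F(1)$ is all of $V(G)$. So the map $\pi:V(G)\to V(G)$ given by $\pi(f_i(0))=f_i(1)$ is well defined, because each vertex is $f_i(0)$ for exactly one $i$. It is also a bijection, because the $f_i(1)$ are pairwise distinct and there are $n$ of them. Because each $f_i$ is a walk and not merely a weak walk, $f_i(0)f_i(1)\in E(G)$. Hence $\pi(v)$ is adjacent to $v$ for every $v$, and in particular $\pi$ has no fixed points.

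Next I will decompose the permutation $\pi$ into its disjoint cycles $(u_1\,u_2\,\ldots\,u_c)$, with $\pi(u_j)=u_{j+1}$ and indices taken modulo $c$. There are no fixed points, so $c\geq 2$ for every cycle. If $c=2$, then $u_1u_2\in E(G)$, and I take the subgraph isomorphic to $K_2$ on $\{u_1,u_2\}$. If $c\geq 3$, the vertices $u_1,\ldots,u_c$ are distinct and consecutive ones are adjacent, including $u_cu_1$. So $u_1u_2\ldots u_cu_1$ is a cycle in $G$ of length $c$, and I take that cycle.

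The permutation cycles partition $V(G)$. Therefore the collection of these subgraphs covers every vertex, and any two of them are vertex disjoint, which gives the required vertex-disjoint cycle/edge cover.

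The only delicate point is the $2$-cycles. A permutation $2$-cycle does not yield a graph cycle, since $G$ is simple. This is exactly why the definition allows $K_2$ components. I also need the fact that every $f_i$ is a genuine walk, so that no vertex is fixed; this is what rules out $1$-cycles. Everything else follows directly.
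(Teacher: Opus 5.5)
Your proof is correct and takes essentially the same route as the paper. The paper also fixes a single step of the march, uses that every stage equals $V(G)$ and that no $f_i$ stays put, and splits the movement into swaps (giving $K_2$'s) and rotations along cycles of length at least $3$; your decomposition of the fixed-point-free permutation $\pi$ is a cleaner packaging of the paper's step-by-step tracing, which it does at an arbitrary time $t$ rather than $t=0$ because that version is reused in Theorem~\ref{thm:direct-topfull}.
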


\begin{proof}
    First notice that since $F$ is an $\ell$-march, for every $t\in \{0, 1, \ldots, \ell-1\}$ and every $i\in\{1,\ldots, n\}$ it holds true that $f_i$ is a walk, hence
    \begin{equation}\label{cond:f_i-march}
      f_i(t+1)\neq f_i(t).
    \end{equation}
    Next, since $F$ is collision-free, for every $t\in \{0, 1, \ldots, \ell\}$, every $i\in\{1,\ldots, n\}$ and every $j\in\{1,\ldots,i-1,i+1,\ldots,n\}$ it holds true that $f_i(t)\neq f_j(t)$. Since the cardinality of $F$ is $n$ and there are $n$ vertices in $G$, the above properties imply that for each $t\in \{0, 1, \ldots, \ell\}$ we have 
    \begin{equation}\label{cond:f_i-everybody}
    V(G)=F(t). 
    \end{equation}

    Let us observe the walks $f_1, \ldots, f_n$ in one time step, say from $t$ to $t+1$. Denote the vertices of $G$ by $v_i:=f_i(t)$ for all $i\in\{1,\ldots, n\}$. Depending on the 'type of move' in step $t\rightarrow t+1$ (there are two possibilities) we define the set $S_t$ of subgraphs of $G$ by the following algorithm. First let $S_t=\emptyset$.
    
    For every two distinct $i_1,i_2\in \{1,\ldots, n\}$ such that $f_{i_1}(t+1)=f_{i_2}(t)$ and $f_{i_2}(t+1)=f_{i_1}(t)$ add the subgraph induced by $\{v_{i_1},v_{i_2}\}$ (it is isomorphic to $K_2$), to the set $S_t$. Informally, players on the vertices $v_{i_1}$ and $v_{i_2}$ exchange positions.

    Next, we claim that for every $i_1,i_2 \in \{1,\ldots, n\}$ satisfying $f_{i_1}(t+1)=f_{i_2}(t)$ and $f_{i_2}(t+1)\neq f_{i_1}(t)$ there is a cycle $v_{i_1}v_{i_2}\ldots v_{i_c}v_{i_1}$ on $c>2$ vertices representing the move of players on these vertices in the same direction along this cycle in the observed step. First, by condition \eqref{cond:f_i-everybody} we know that there exists $j\in \{1,\ldots, n\}$, $j\not\in\{i_1,i_2\}$, such that $f_{j}(t+1)=f_{i_1}(t)$ (somebody moves to $v_{i_1}$). Since $f_{i_2}(t+1)\neq f_{i_1}(t)$ using \eqref{cond:f_i-march} we know that there exists $i_3\in \{1,\ldots, n\}\setminus\{i_1,i_2\}$ such that $f_{i_2}(t+1)=f_{i_3}(t)$. If $i_3=j$, then we get the cycle $v_{i_1}v_{i_2}v_{i_3}v_{i_1}$. If $i_3\neq j$ then, again by \eqref{cond:f_i-march}, there exists $i_4\in \{1,\ldots, n\}\setminus\{i_1,i_2,i_3\}$ such that $f_{i_3}(t+1)=f_{i_4}(t)$. If $i_4=j$ we get the cycle $v_{i_1}v_{i_2}v_{i_3}v_{i_4}v_{i_1}$. If $i_4\neq j$ we continue with analogous observations. Since the set of vertices is finite this process will always end with a cycle $v_{i_1}v_{i_2}\ldots v_{i_c}v_{i_1}$ on $c>2$ vertices.

    For all $i_1,i_2 \in \{1,\ldots, n\}$ satisfying $f_{i_1}(t+1)=f_{i_2}(t)$ and $f_{i_2}(t+1)\neq f_{i_1}(t)$ we add the obtained (mutually disjoint) cycles $v_{i_1}v_{i_2}\ldots v_{i_c}v_{i_1}$, $c>2$, to the set $S_t$. By the above construction it is clear that the obtained set $S_t$ represents a vertex-disjoint cycle/edge cover of $G$ since every vertex $v_i\in V(G)$ is contained in exactly one of the graphs in $S_t$. We call $S_t$ the vertex-disjoint cycle/edge cover of $G$ obtained from the march $F$ at time $t$.
\end{proof}

\begin{theorem}\label{thm:direct-topfull}
    A connected graph $G$ is direct-topfull if and only if for any two vertices $u,v
    \in V(G)$ there exists a $u,v$-path $P$ such that every edge of $P$ belongs to a subgraph of some vertex-disjoint cycle/edge cover.
\end{theorem}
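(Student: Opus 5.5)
We prove both implications separately, working throughout with the notion of an \emph{admissible edge}: an edge $e\in E(G)$ that belongs to a subgraph of some vertex-disjoint cycle/edge cover of $G$. Let $H$ be the spanning subgraph of $G$ whose edges are exactly the admissible edges. The condition of Theorem~\ref{thm:direct-topfull} says precisely that $H$ is connected.

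\textbf{Necessity.} Assume $G$ is direct-topfull, and let $F=\{f_1,\ldots,f_n\}$ be a collision-free $\ell$-tour on $G$. Fix a time $t\in\{0,\ldots,\ell-1\}$. Lemma~\ref{lemma:fs-give-ce-cover} gives the cover $S_t$ obtained from $F$ at time $t$. Every player's move $f_i(t)f_i(t+1)$ is an edge of some member of $S_t$: it is either a swapped $K_2$ or an edge of one of the constructed cycles. Hence every edge traversed by any $f_i$ is admissible, so each $f_i$ is a walk in $H$. Now take arbitrary $u,v\in V(G)$. Let $f_i$ be the player with $f_i(0)=u$. Since $f_i$ is a track, $f_i$ reaches $v$ at some time, giving a $u,v$-walk in $H$. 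Shortening this walk to a path yields the required $P$.

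\textbf{Sufficiency.} Assume $H$ is connected. We build the tour as an extension of collision-free marches in which every stage uses all $n$ vertices, and apply Remark~\ref{rem:extension-cf}. The basic building block is this: for any admissible edge $e$ with cover $S_e\ni$ (a subgraph containing) $e$, the march induced by $S_e$ (Lemma~\ref{lem:cover-track}) of length $1$ moves the player on one endpoint of $e$ across $e$, and keeps the full configuration. Applying this block with $S_e$ repeated a suitable number of times, say $c$ or $c-1$ steps for a $c$-cycle, we can realize any rotation of the players along that cycle or swap across that $K_2$. Every intermediate stage is again a full placement, so consecutive blocks always compose.

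The key step is to show that a fixed player $p$ can be routed along any walk in $H$. At each step we apply the block for the next edge of the walk, regardless of how the other players are permuted. This works because each block is a permutation of occupants that moves $p$ across the chosen edge, and blocks can be concatenated freely. Taking a closed spanning walk $W$ in the connected graph $H$, we route player $1$ along $W$, then player $2$ along a walk in $H$ covering all vertices, and so on, concatenating all these marches. Every $f_i$ is a walk in $G$ at every step, since the march induced by a cover moves every player. Each player becomes a track during its own phase, and collision-freeness is preserved by Remark~\ref{rem:extension-cf}. This produces a collision-free tour with $n$ players, so $\dcap{1}{G}=n$.

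\textbf{Main obstacle.} The delicate point is the parity/length issue in the sufficiency construction: one step of an induced march moves every player simultaneously, so we must check that the block for edge $e$ genuinely places $p$ on the other endpoint. This is immediate, since one step moves $p$ from one endpoint of $e$ to the other, provided the orientation of the cycle in $S_e$ is chosen so that $p$ travels along $e$ in the desired direction. This is always possible because in the construction of Lemma~\ref{lem:cover-track} cycles may be traversed in either direction. Checking this orientation freedom carefully is the step we expect to require the most care.
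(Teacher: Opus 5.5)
Your proposal is correct and follows essentially the same route as the paper. Necessity uses the covers $S_t$ from Lemma~\ref{lemma:fs-give-ce-cover} along the track from $u$ to $v$; sufficiency concatenates one-step marches induced by covers (Lemma~\ref{lem:cover-track}), orients the relevant cycle so the chosen player crosses the desired edge, and invokes Remark~\ref{rem:extension-cf}. Your aside about repeating a block $c$ or $c-1$ times to realize arbitrary rotations is never used and can be dropped.
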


\begin{proof} 
Assume that $G$ is a connected direct-topfull graph on $n$ vertices. Hence, there exists a collision-free $\ell$-tour $F=\{f_1, \ldots, f_n\}$ on $G$. Let $u,v\in V(G)$ be arbitrary. There exists $i\in \{1,\ldots, n\}$ such that $f_i(0)=u$. Since $f_i$ is an $\ell$-track and by definition surjective, there exists $t\in \{0, \ldots, \ell\}$ such that $f_i(t)=v$. Moreover, the sequence of vertices $f_i(0),f_i(1),\ldots,f_i(t)$ forms a walk $W$ from $u$ to $v$ in $G$. Therefore, there also exists a $u,v$-path $P$ in $G$ whose edges are a subset of the edges traversed by this walk.

Let $xy \in E(P)$ be an arbitrary edge of the path $P$. Because $xy\in E(W)$, there must exist some specific time step $t' \in \{0, \ldots, t-1\}$ such that $\{f_i(t'), f_i(t'+1)\} = \{x, y\}$. Using the same line of thought as in the proof of Lemma \ref{lemma:fs-give-ce-cover}, one can construct $S_{t'}$, the vertex-disjoint cycle/edge cover of $G$ obtained from the march $F$ at time $t'$. 
By the construction of $S_{t'}$, the edge $xy$ belongs to a subgraph in $S_{t'}$.
Since $xy$ was an arbitrary edge of $P$, it follows that every edge of $P$ belongs to a subgraph of some vertex-disjoint cycle/edge cover. This concludes the first part of the proof.

For the converse, assume that for any two vertices $u,v \in V(G)$ there exists a $u,v$-path $P$ such that every edge of $P$ belongs to a subgraph of some vertex-disjoint cycle/edge cover. We will construct a collision-free $\ell$-tour $F=\{f_1, \ldots,f_n\}$ on $G$.

Let $V(G) = \{v_1, \ldots, v_n\}$. By assumption, $G$ has a vertex-disjoint cycle/edge cover. Therefore, by Lemma \ref{lem:cover-track}, there exists a collision-free $1$-march on $G$. Let $F' = \{f_1, \ldots, f_n\}$ be this initial collision-free march. 

We construct the tour by systematically extending the march so that each walk $f_i$ visits every vertex. For each index $i \in \{1, \ldots, n\}$ and each vertex $x \in V(G)$, we extend $F'$ so that the walk $f_i$ reaches $x$ as follows. Let $\ell'$ be the current length of $F'$, and let $u = f_i(\ell')$. By assumption, there exists a $u,x$-path $P$ with vertices $u = p_0, p_1, \ldots, p_k = x$ such that each edge $p_j p_{j+1}$ belongs to a subgraph of some vertex-disjoint cycle/edge cover $S_j$.

We traverse $P$ edge by edge. For each $j \in \{0, \ldots, k-1\}$, let $S_j$ be the vertex-disjoint cycle/edge cover containing the edge $p_j p_{j+1}$. By Lemma \ref{lem:cover-track}, $S_j$ induces a collision-free $1$-march, say $M_j = \{g_1, \ldots, g_n\}$. If the component of $S_j$ containing $p_j p_{j+1}$ is a cycle, we choose the orientation of the walks on that cycle defined in Lemma \ref{lem:cover-track} such that the function $g_s$ satisfying $g_s(0) = p_j$ has $g_s(1) = p_{j+1}$. We extend $F'$ with $M_j$ (i.e., we update $F'$ to the extension $M_j F'$, $F':=M_j F'$). After $k$ such extensions, the new march ends at a step $\ell'+k$ where $f_i(\ell'+k) = x$. 

Repeating this extension process for every index $i$ and every target vertex yields a finite sequence of extensions. Let $F = \{f_1, \ldots, f_n\}$ be the final march obtained after all extensions are complete. By Remark \ref{rem:extension-cf}, extending a collision-free march by another collision-free march preserves the collision-free property, so $F$ is a collision-free march. Furthermore, because our process explicitly extended every walk $f_i$ to include every vertex $x \in V(G)$ in its image, each walk $f_i$ in $F$ is surjective. Thus, the walks are $\ell$-tracks, and $F$ is a collision-free $\ell$-tour, which implies $G$ is direct-topfull.
\end{proof}

The characterization of direct-topfull graphs in Theorem \ref{thm:direct-topfull} naturally connects the direct $1$-capacity of a graph to classic graph factorizations. A \emph{matching} in a graph $G$ is a set of pairwise disjoint edges. A \emph{perfect matching} is a matching that covers every vertex of $G$. A \emph{$1$-factor} of $G$ is a spanning subgraph in which every vertex has degree exactly $1$. Note that the edges of a perfect matching naturally induce a $1$-factor whose connected components are all individually isomorphic to $K_2$. A $2$-factor is a spanning subgraph in which every vertex has degree $2$ (a set of vertex-disjoint cycles). Therefore, any $1$-factor or $2$-factor of $G$ is a vertex-disjoint cycle/edge cover of $G$. This observation yields several immediate consequences for well-studied graph families. 

\begin{corollary}
If $G$ is a Hamiltonian graph, then $G$ is direct-topfull.
\end{corollary}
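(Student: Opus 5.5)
The plan is to apply Theorem~\ref{thm:direct-topfull} directly, with a Hamiltonian cycle serving as the vertex-disjoint cycle/edge cover. Let $G$ be Hamiltonian on $n\geq 3$ vertices, and let $C=x_1x_2\ldots x_nx_1$ be a Hamiltonian cycle in $G$. The set $S=\{C\}$ contains a single subgraph, so its members trivially share no vertices. Since $C$ contains every vertex of $G$, $S$ is a vertex-disjoint cycle/edge cover of $G$; in fact it is a $2$-factor, in line with the remark preceding the corollary.

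Next I would verify the path condition of Theorem~\ref{thm:direct-topfull}. Take arbitrary $u,v\in V(G)$, say $u=x_a$ and $v=x_b$ with $a\le b$. If $u=v$, the trivial path with no edges satisfies the condition vacuously. Otherwise, take $P=x_ax_{a+1}\ldots x_b$, the segment of $C$ running from $u$ to $v$. Every edge of $P$ is an edge of $C$, and $C$ is a subgraph in the cover $S$. So every edge of $P$ belongs to a subgraph of some vertex-disjoint cycle/edge cover, and Theorem~\ref{thm:direct-topfull} gives that $G$ is direct-topfull.

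A more concrete alternative avoids the theorem altogether. Place the $n$ players on $x_1,\ldots,x_n$ and have all of them advance one step along $C$ at each time unit, for $n-1$ steps; this is the march induced by $S$ from Lemma~\ref{lem:cover-track}. At every step the occupied vertices are a permutation of $V(G)$, so the march is collision-free, and each player visits all $n$ vertices, so it is a tour.

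There is no real obstacle here. The only point to check is the convention for $u=v$, which is handled by the trivial path, together with the implicit requirement $n\geq 3$ that comes from $G$ being Hamiltonian.
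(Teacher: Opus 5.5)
Your proposal is correct and matches the paper's proof: the Hamiltonian cycle $C$ is a $2$-factor, hence a vertex-disjoint cycle/edge cover, and a segment of $C$ between $u$ and $v$ supplies the path required by Theorem~\ref{thm:direct-topfull}. Your alternative, rotating all $n$ players around $C$ for $n-1$ steps, is also valid; it produces the collision-free tour directly, without going through the theorem.
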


\begin{proof}
Let $C$ be a Hamiltonian cycle of $G$. Since $C$ is a spanning subgraph of $G$, it is a $2$-factor, and thus a vertex-disjoint cycle/edge cover of $G$. For any arbitrary $u,v \in V(G)$, there exists a $u,v$-path $P$ that is a subgraph of $C$. Consequently, every edge of $P$ belongs to a vertex-disjoint cycle/edge cover, so $G$ is direct-topfull by Theorem \ref{thm:direct-topfull}.
\end{proof}

Remember, a connected graph $G$ is called \emph{bridgeless} if the removal of any single edge from $G$ leaves the graph connected. A graph is \emph{$k$-regular} if every vertex is of degree $k$. A $3$-regular graph is also called a \emph{cubic} graph. The following Petersen's theorem is well-known.

\begin{theorem}\cite{petersen1891theorie,west2001introduction}\label{thm:petersen-cubic}
Every bridgeless cubic graph contains a $1$-factor.
\end{theorem}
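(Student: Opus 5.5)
The plan is to derive the statement from Tutte's $1$-factor theorem, which I take as a classical tool from \cite{west2001introduction}. Tutte's theorem says that a graph $G$ has a $1$-factor if and only if, for every $S\subseteq V(G)$, the number $o(G-S)$ of connected components of $G-S$ with an odd number of vertices is at most $|S|$. So I would fix a bridgeless cubic graph $G$ and an arbitrary $S\subseteq V(G)$, and verify the inequality $o(G-S)\leq |S|$ by counting edges between $S$ and the odd components of $G-S$.

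First I would dispose of $S=\emptyset$. By the handshake lemma $3|V(G)|=2|E(G)|$, so $|V(G)|$ is even. Each component of $G$ therefore has even order: a component of a cubic graph is itself cubic, and the same parity argument applies. Hence $o(G)=0$.

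Next, let $S\neq\emptyset$ and let $H$ be an odd component of $G-S$. Write $e(H,S)$ for the number of edges with one end in $H$ and the other in $S$. Every edge leaving $H$ ends in $S$, because $H$ is a component of $G-S$. Summing degrees over $V(H)$ gives
\[
3|V(H)| = 2|E(H)| + e(H,S).
\]
Since $|V(H)|$ is odd, $e(H,S)$ is odd. The key step is the lower bound $e(H,S)\geq 3$. If $e(H,S)=1$, that single edge would be a bridge of $G$, contradicting bridgelessness. The case $e(H,S)=0$ cannot occur, since $G$ is connected and $S\neq\emptyset$; in any case $0$ is not odd. So $e(H,S)\geq 3$.

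Summing over all odd components, the total number of edges between these components and $S$ is at least $3\,o(G-S)$. On the other hand, every vertex of $S$ has degree $3$, so at most $3|S|$ edges are incident to $S$. This gives $3\,o(G-S)\leq 3|S|$, hence $o(G-S)\leq|S|$, and Tutte's theorem yields a $1$-factor.

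The only real depth lies in Tutte's theorem itself, which I would cite rather than reprove. Within the argument above, the delicate point is the parity observation that forces $e(H,S)$ to be odd. Combined with the absence of bridges, it lifts the bound from $e(H,S)\geq 1$ to $e(H,S)\geq 3$, which exactly matches the degree $3$ of the vertices in $S$.
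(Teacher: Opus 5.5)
Your argument is correct. It is the standard proof: the paper does not prove this theorem but only cites Petersen and West's textbook, and there it is derived from Tutte's $1$-factor theorem by exactly your count. Each odd component $H$ of $G-S$ sends an odd number of edges to $S$, hence at least $3$ by bridgelessness, while $S$ can absorb at most $3|S|$ such edges. Your appeal to connectivity to rule out $e(H,S)=0$ is unnecessary, since parity already excludes it, so your proof also covers disconnected bridgeless cubic graphs.
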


\begin{corollary}
If $G$ is a connected bridgeless cubic graph, then $G$ is direct-topfull.
\end{corollary}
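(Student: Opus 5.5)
The natural route is through Theorem \ref{thm:direct-topfull}: it suffices to show that for any two vertices $u,v\in V(G)$ there is a $u,v$-path all of whose edges lie in subgraphs of some vertex-disjoint cycle/edge cover. The plan is to show the stronger statement that \emph{every} edge of a connected bridgeless cubic graph $G$ lies in some perfect matching. Any $u,v$-path in $G$, which exists by connectivity, then has every edge contained in a $1$-factor, and a $1$-factor is a vertex-disjoint cycle/edge cover. Theorem \ref{thm:direct-topfull} then finishes the argument.

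To show that a given edge $e=xy$ lies in a perfect matching, I would use the complementary structure. By Theorem \ref{thm:petersen-cubic}, $G$ has a $1$-factor $M$, and $G-M$ is a $2$-factor $Q$, i.e.\ a disjoint union of cycles covering all vertices. If $e\in M$ we are done. Otherwise $e$ lies on some cycle $C$ of $Q$.

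\begin{itemize}
\item If $C$ is even, then $Q$ does not directly give a matching. However, $C$ itself is a cycle component of the vertex-disjoint cycle/edge cover $Q$, so $e$ lies in a subgraph of some vertex-disjoint cycle/edge cover.
\item If $C$ is odd, the same observation applies: the $2$-factor $Q$ is already a vertex-disjoint cycle/edge cover containing $e$ in one of its components.
\end{itemize}

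So in fact we do not need $e$ to lie in a perfect matching. It is enough that every edge lies in $M$ or in $Q=G-M$, and together these cover all of $E(G)$. Hence every edge of $G$ belongs to a subgraph of some vertex-disjoint cycle/edge cover, namely $M$ or $Q$.

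The only step needing care is verifying that $Q=G-M$ is a genuine vertex-disjoint cycle/edge cover. Since $G$ is cubic and $M$ is a $1$-factor, every vertex has degree exactly $2$ in $G-M$. Since $G$ is simple, the components of $G-M$ are cycles of length at least $3$. They are spanning and pairwise vertex-disjoint, so $Q$ is a $2$-factor.

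With this in place, pick any $u,v$-path $P$ (which exists because $G$ is connected). Each edge of $P$ lies in $M$ or in $Q$, both of which are vertex-disjoint cycle/edge covers, so Theorem \ref{thm:direct-topfull} yields that $G$ is direct-topfull. I expect no real obstacle. The only place where bridgelessness is used is to obtain $M$ via Petersen's theorem, and the mild subtlety is noticing that the complement of a perfect matching in a cubic graph is itself an admissible cover.
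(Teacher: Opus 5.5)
Your proof is correct and is essentially the paper's own argument. You obtain a $1$-factor $M$ from Petersen's theorem, note that $G-M$ is a $2$-factor so that every edge lies in one of two vertex-disjoint cycle/edge covers, and then apply Theorem~\ref{thm:direct-topfull} to any $u,v$-path. The opening plan to place every edge in a perfect matching and the even/odd case split are unnecessary detours, and you rightly discard them.
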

\begin{proof}
Let $G$ be a connected bridgeless cubic graph. By Theorem \ref{thm:petersen-cubic}, $G$ contains a $1$-factor, say $M_1$. Since $G$ is a cubic graph and for every vertex $v\in V(G)$, $M_1$ contains exactly one edge incident with $v$, removing the edges of $M_1$ from $G$ leaves a spanning subgraph where every vertex has degree of exactly $2$. By definition, this remaining spanning subgraph is a $2$-factor, say $M_2$. Thus, the edge set of $G$ can be partitioned into $M_1$ and $M_2$. Both $M_1$ and $M_2$ are vertex-disjoint cycle/edge covers of $G$. Because every edge of $G$ belongs to either $M_1$ or $M_2$, every edge of $G$ belongs to at least one vertex-disjoint cycle/edge cover. 

Let $u,v \in V(G)$ be arbitrary. Because $G$ is connected, there exists a $u,v$-path $P$ in $G$. Since every edge of $G$ is part of a vertex-disjoint cycle/edge cover, every edge of $P$ must also belong to some vertex-disjoint cycle/edge cover. The assertion then follows as an immediate consequence of Theorem \ref{thm:direct-topfull}.
\end{proof}

For the next corollary we will use the following Petersen's theorem.

\begin{theorem}\cite{petersen1891theorie,west2001introduction}\label{thm:petersen-2factor}
Every regular graph of positive even degree has a $2$-factor.
\end{theorem}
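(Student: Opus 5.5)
The statement to be proved is Petersen's classical $2$-factor theorem: every regular graph of positive even degree $2k$ has a $2$-factor. I would follow the standard Euler-tour/bipartite-matching argument. Since a $2$-factor of $G$ is just the union of $2$-factors of its components, it suffices to treat a connected $2k$-regular graph $G$ with $V(G)=\{v_1,\ldots,v_n\}$.

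\textbf{Step 1: orient $G$ evenly.} Because every vertex has even degree and $G$ is connected, $G$ has a closed Eulerian circuit. Traverse this circuit and orient each edge in the direction of travel. Every passage through a vertex uses one incoming and one outgoing edge, so each vertex receives in-degree $k$ and out-degree $k$.

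\textbf{Step 2: build an auxiliary bipartite graph.} Define $H$ with parts $X=\{x_1,\ldots,x_n\}$ and $Y=\{y_1,\ldots,y_n\}$. Put an edge $x_iy_j$ in $H$ exactly when the arc $v_i\to v_j$ appears in the orientation. Each $x_i$ then has degree equal to the out-degree of $v_i$, namely $k$. Each $y_j$ has degree equal to the in-degree of $v_j$, also $k$. Hence $H$ is a $k$-regular bipartite graph with $k\ge 1$.

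\textbf{Step 3: find a perfect matching in $H$.} This is the step needing real justification, and I would use Hall's theorem. Take $A\subseteq X$. The $k|A|$ edges leaving $A$ all land in $N(A)$. Every vertex of $N(A)$ has degree exactly $k$, so these edges number at most $k|N(A)|$. Therefore $|N(A)|\ge |A|$, and $H$ has a perfect matching $M$.

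\textbf{Step 4: translate $M$ back to $G$.} The matching $M$ assigns to each $v_i$ exactly one out-arc $v_i\to v_{\pi(i)}$, where $\pi$ is a permutation of $\{1,\ldots,n\}$. Thus every vertex has exactly one chosen out-arc and exactly one chosen in-arc.

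Now check that the chosen edges form a $2$-factor in the simple graph $G$. Since $G$ is simple, it has no loops, so $\pi$ has no fixed points. Each cycle of $\pi$ has length at least $3$. To see this, a $2$-cycle would need both arcs $v_i\to v_j$ and $v_j\to v_i$. That in turn needs two parallel edges between $v_i$ and $v_j$, since the orientation gives each edge only one direction. Simplicity rules this out.

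So the chosen edges form vertex-disjoint cycles covering $V(G)$, and each vertex has degree exactly $2$ in them. This is the required $2$-factor.

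The main obstacle is Step 3, which is handled by the double-counting Hall argument. The other point needing care is the $2$-cycle exclusion in Step 4, which relies on $G$ being simple, as all graphs in this paper are.
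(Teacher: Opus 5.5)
Your proof is correct. It is the standard argument found in West's textbook, which the paper cites in place of giving its own proof: an Eulerian orientation, the $k$-regular auxiliary bipartite graph, a perfect matching via Hall's condition, and simplicity of $G$ to rule out loops and $2$-cycles.
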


\begin{corollary}
If $G$ is a connected $2k$-regular graph, for any integer $k \geq 1$, then $G$ is direct-topfull.
\end{corollary}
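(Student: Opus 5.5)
The plan is to mirror the argument for bridgeless cubic graphs: show that \emph{every} edge of $G$ lies in some vertex-disjoint cycle/edge cover, and then invoke Theorem~\ref{thm:direct-topfull}.

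The key tool is Petersen's 2-factorization theorem, which strengthens Theorem~\ref{thm:petersen-2factor}: a $2k$-regular graph decomposes into $k$ edge-disjoint $2$-factors. This follows by applying Theorem~\ref{thm:petersen-2factor} iteratively. Let $G$ be a connected $2k$-regular graph. By Theorem~\ref{thm:petersen-2factor}, $G$ has a $2$-factor $M_1$. Deleting $E(M_1)$ from $G$ gives a spanning subgraph $G_1$ that is $2(k-1)$-regular. If $k-1\geq 1$, apply Theorem~\ref{thm:petersen-2factor} to $G_1$ to get a $2$-factor $M_2$ of $G_1$. Since $G_1$ is spanning, $M_2$ is also a $2$-factor of $G$. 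Continue this way. Note that Theorem~\ref{thm:petersen-2factor} is stated for regular graphs and does not require connectivity, so it still applies even if some $G_j$ is disconnected.

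After $k$ steps the remaining graph is $0$-regular. Hence $E(G)=E(M_1)\cup\cdots\cup E(M_k)$, and each $M_j$ is a $2$-factor of $G$, hence a vertex-disjoint cycle/edge cover of $G$. So every edge of $G$ belongs to a subgraph (a cycle) of some vertex-disjoint cycle/edge cover. Now let $u,v\in V(G)$ be arbitrary. Since $G$ is connected, there is a $u,v$-path $P$, and every edge of $P$ lies in one of the $M_j$. Theorem~\ref{thm:direct-topfull} then gives that $G$ is direct-topfull.

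The only point needing care is the iteration step. One must check that removing a $2$-factor keeps the graph regular of even degree, and that a $2$-factor of the spanning subgraph $G_j$ is also a $2$-factor of $G$. Both are immediate because $2$-factors are spanning and every degree drops by exactly $2$. A further subtlety is whether the cited theorem assumes connectivity. If it does, apply it componentwise to $G_j$: each component is $2(k-j)$-regular, and the union of the components' $2$-factors is a $2$-factor of $G_j$. With these checks there is no real obstacle.
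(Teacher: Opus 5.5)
Your proposal is correct and follows the paper's proof: iterate Theorem~\ref{thm:petersen-2factor} to split $E(G)$ into $k$ edge-disjoint $2$-factors, so every edge of any $u,v$-path lies in a vertex-disjoint cycle/edge cover, and then apply Theorem~\ref{thm:direct-topfull}. Your extra checks (the remainder stays even-regular and spanning, and may be disconnected) only spell out what the paper leaves implicit.
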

\begin{proof}
By Theorem \ref{thm:petersen-2factor}, if $k\geq 1$ is an integer, then every $2k$-regular graph has a $2$-factor. By repeatedly extracting a $2$-factor and noting that the remaining graph is $(2k-2)$-regular, $G$ can be decomposed into exactly $k$ edge-disjoint $2$-factors. Since their union spans the entire edge set of $G$, every edge of $G$ belongs to a $2$-factor (and thus to a vertex-disjoint cycle/edge cover). As $G$ is connected, the condition of Theorem \ref{thm:direct-topfull} is satisfied for any path between any two vertices.
\end{proof}

A connected graph is called \emph{elementary} if the union of all its perfect matchings forms a connected subgraph \cite{lovasz2009matching}.

\begin{corollary}
If $G$ is an elementary graph, then $G$ is direct-topfull.
\end{corollary}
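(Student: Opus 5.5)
The plan is to verify the path condition of Theorem \ref{thm:direct-topfull} directly, using the connected subgraph formed by the union of all perfect matchings as the source of the required paths.

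Let $G$ be elementary and let $H$ denote the union of all perfect matchings of $G$. This is the spanning subgraph whose edge set is the union of the edge sets of all perfect matchings. Since $G$ is elementary, $H$ is connected. Connectivity requires at least one perfect matching to exist, so $H$ is spanning: every vertex of $G$ is covered by a perfect matching and hence lies in $H$.

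First, every perfect matching $M$ of $G$ induces a $1$-factor whose components are all isomorphic to $K_2$. By the discussion preceding the corollaries, such a $1$-factor is a vertex-disjoint cycle/edge cover of $G$. Consequently, every edge $e\in E(H)$ lies in some perfect matching $M$. It therefore belongs to a subgraph, namely the $K_2$ spanned by $e$, of the vertex-disjoint cycle/edge cover given by $M$.

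Second, let $u,v\in V(G)$ be arbitrary. Since $H$ is connected and contains both $u$ and $v$, there is a $u,v$-path $P$ in $H$, which is also a $u,v$-path in $G$. By the first step, every edge of $P$ belongs to a subgraph of some vertex-disjoint cycle/edge cover. Theorem \ref{thm:direct-topfull} then gives that $G$ is direct-topfull.

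The only point needing care is the degenerate case and the meaning of "connected subgraph". If $G=K_1$, it has an empty perfect matching and the statement is vacuous or trivial; by convention this case is excluded because the capacities are only defined for non-trivial graphs. Otherwise, the definition in \cite{lovasz2009matching} is understood as requiring the union of perfect matchings to be a connected spanning subgraph, so $u$ and $v$ both lie in $H$. No further obstacle is expected.
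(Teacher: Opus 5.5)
Your proposal is correct and follows essentially the same argument as the paper: take the connected spanning union $H$ of all perfect matchings, join any $u,v$ by a path in $H$, note that each edge of that path lies in a perfect matching (a $1$-factor, hence a vertex-disjoint cycle/edge cover), and apply Theorem \ref{thm:direct-topfull}. The only difference is cosmetic. The paper sets aside the two-vertex case $K_2$ separately, whereas you set aside $K_1$; the main argument is unaffected.
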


\begin{proof}
If $G$ has exactly two vertices, then it is isomorphic to $K_2$, and the assertion follows immediately. 

Suppose $G$ has more than two vertices. By the definition of an elementary graph, the union of all perfect matchings of $G$ forms a connected subgraph, say $H$. Since every perfect matching covers all vertices of $G$, the subgraph $H$ is a spanning subgraph of $G$. Let $u,v \in V(G)$ be arbitrary. Because $H$ is a connected spanning subgraph, there exists a $u,v$-path $P$ in $H$. 

By the construction of $H$, every edge of $P$ belongs to some perfect matching of $G$. The edges of any perfect matching induce a $1$-factor of $G$, which is a spanning subgraph whose connected components are all isomorphic to $K_2$. Therefore, the set of components of this $1$-factor forms a vertex-disjoint cycle/edge cover of $G$. Consequently, every edge of $P$ belongs to a subgraph of some vertex-disjoint cycle/edge cover. The assertion then follows as an immediate consequence of Theorem \ref{thm:direct-topfull}.
\end{proof}

Let $G$ be a connected graph. A vertex $v \in V(G)$ is a \emph{cut vertex} of $G$ if $G-v$ is not connected. We will need the following result by Whitney \cite{Wh01, Wh02}.

\begin{theorem}\cite{Wh01, Wh02}\label{thm:2connected}
Let $G$ be a connected graph with at least three vertices. Then $G$ has no cut vertex if and only if any two distinct vertices lie on a common cycle.  
\end{theorem}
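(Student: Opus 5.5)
The statement is Whitney's classical characterisation of $2$-connected graphs, so I would give the standard argument, proving the two implications separately.

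\emph{Sufficiency (cycles imply no cut vertex).} Suppose any two distinct vertices lie on a common cycle, and let $v\in V(G)$ be arbitrary. Take two distinct vertices $x,y\in V(G)\setminus\{v\}$; these exist since $|V(G)|\geq 3$. They lie on a common cycle $C$, and $C$ splits into two internally disjoint $x,y$-paths. The vertex $v$ lies on at most one of them, so the other is an $x,y$-path in $G-v$. Hence $G-v$ is connected, and $v$ is not a cut vertex.

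\emph{Necessity (no cut vertex implies cycles).} Assume $G$ has no cut vertex. I would prove, by induction on $k=d_G(u,v)\geq 1$, that any two distinct vertices $u,v$ lie on a common cycle.

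For the base case $k=1$, $uv$ is an edge. I first show $uv$ is not a bridge. If it were, then $G-uv$ would have exactly two components, one containing $u$ and one containing $v$. Since $|V(G)|\geq 3$, one of these components, say the one containing $u$, has another vertex. Then $u$ would be a cut vertex, a contradiction. So $G-uv$ contains a $u,v$-path, and adding the edge $uv$ to it gives the required cycle.

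For the inductive step, let $d_G(u,v)=k\geq 2$ and let $w$ be the neighbour of $v$ on a shortest $u,v$-path. Then $d_G(u,w)=k-1$, so by the induction hypothesis $u$ and $w$ lie on a common cycle $C$. Since $w$ is not a cut vertex, $G-w$ contains a $u,v$-path $P$. If $v\in V(C)$ we are done. Otherwise, let $z$ be the last vertex of $P$ (walking from $u$ to $v$) that lies on $C$; such a vertex exists because $u\in V(C)$, and $z\neq w$. The cycle $C$ splits into two internally disjoint $u,w$-paths $Q_1,Q_2$, and $z$ lies on one of them, say $Q_1$. The required cycle is then obtained by concatenating:
\begin{itemize}
\item the subpath of $Q_1$ from $w$ to $z$,
\item the subpath of $P$ from $z$ to $v$, which meets $C$ only at $z$,
\item the edge $vw$.
\end{itemize}
This closed walk is a cycle containing $v$, and it has length at least $3$ because $z\neq w$ and $v\notin V(C)$. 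However, it need not contain $u$ unless $z$ lies on the correct side, so to obtain a cycle through both $u$ and $v$ I instead take:
\begin{itemize}
\item $Q_2$ from $u$ to $w$,
\item the edge $wv$,
\item the reverse of the $z,v$-segment of $P$, from $v$ to $z$,
\item the subpath of $Q_1$ from $z$ back to $u$.
\end{itemize}

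\emph{Main obstacle.} The delicate step is checking that this last closed walk is a genuine cycle. The segment of $P$ from $z$ to $v$ avoids $C$ except at $z$, and it avoids $w$ because $P\subseteq G-w$. The subpath of $Q_1$ from $z$ to $u$ and the path $Q_2$ share only the vertex $u$, since $Q_1$ and $Q_2$ are internally disjoint. If $z=u$, the $Q_1$ part is trivial and the cycle is $Q_2$, then $wv$, then the $v,u$-segment of $P$, which is still a cycle of length at least $3$. This disjointness bookkeeping, together with the choice of $z$ as the last vertex of $P$ on $C$, is the only point that needs care.
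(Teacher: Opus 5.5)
Your proof is correct. The paper itself gives no proof of this statement: it is quoted from Whitney and used only as a black box in the proof of Theorem~\ref{thm:cartesian-topfull}, so there is no internal argument to compare against. What you wrote is the standard textbook proof by induction on $d_G(u,v)$, as in \cite{west2001introduction}, where it is phrased via two internally disjoint $u,v$-paths rather than a common cycle. Each step checks out:
\begin{itemize}
\item In the sufficiency direction, $v\notin\{x,y\}$ can only be an interior vertex of one of the two $x,y$-arcs of $C$.
\item In the base case, your bridge argument uses $|V(G)|\ge 3$ exactly where it is needed.
\item In the inductive step, the closed walk formed by $Q_2$, the edge $wv$, the $v,z$-segment of $P$ and the $z,u$-subpath of $Q_1$ is a cycle of length at least $3$.
\end{itemize}
Two small presentational points. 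First, delete the abandoned first construction, which only yields a cycle through $v$ and $w$. Second, where you claim that the $z,u$-subpath of $Q_1$ meets $Q_2$ only in $u$, say explicitly that you use more than internal disjointness of $Q_1$ and $Q_2$. You also need that this subpath does not contain $w$. This holds because $z\neq w$ and $w$ is the other endpoint of $Q_1$.
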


\begin{theorem}\label{thm:cartesian-topfull}
    A connected graph $G$ on  more than three vertices is Cartesian-topfull if and only if it has no cut vertices.
\end{theorem}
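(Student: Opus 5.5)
The plan is to pass to the dual march. With $n-1$ players there is exactly one vacant vertex (the \emph{hole}) at every stage. In a patient march, each step moves exactly one player, and since every other vertex is occupied, that player must move into the hole. So each step swaps the hole with an adjacent occupied vertex. By Definition~\ref{def:dual-march}, the dual $D_F=\{h\}$ is a single walk $h$ (the trajectory of the hole). Conversely, any walk $h$ of the hole determines a unique collision-free patient march of the $n-1$ players, and extensions (Definition~\ref{def:extension}, Remark~\ref{rem:extension-cf}) correspond to concatenating hole walks. Thus $G$ is Cartesian-topfull iff some hole walk makes every player visit every vertex. Note also that a graph without cut vertices on $n\ge 4$ vertices is not a path, so $\sigma^\square_V(G)\ge 1$ and $\ccap{1}{G}$ is defined.

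\emph{Necessity.} Suppose $v$ is a cut vertex. Let $C_1$ be a component of $G-v$ and let $C_2$ be the union of the other components; both are nonempty.
\begin{enumerate}[(i)]
\item If the hole never visits $v$, it stays inside one side, and the players on the other side never move. So they are not tracks.
\item Otherwise, consider a time when the hole is at $v$. Then $C_1$ and $C_2$ are both fully occupied. Let $S_2$ be the set of players on $C_2$ at that time.
\item While the hole is in $C_1\cup\{v\}$, every move involves only vertices of $C_1\cup\{v\}$, so the players on $C_2$ are frozen.
\item When the hole leaves $v$ into $C_2$, a player of $S_2$ enters $v$. From then on, as long as the hole is in $C_2$, only vertices of $C_2\cup\{v\}$ are involved. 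The hole can return to $v$ only by the player at $v$ moving into $C_2$.
\item Hence, each time the hole is at $v$ again, the players on $C_2$ are exactly $S_2$. Moreover, players of $S_2$ never occupy a vertex of $C_1$.
\end{enumerate}
Since $C_1\neq\emptyset$, no player of $S_2$ is a track, so $G$ is not Cartesian-topfull.

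\emph{Sufficiency.} Let $G$ have no cut vertex and $n\ge 4$. The key primitive is that running the hole once around a cycle $C$ shifts every player on $C$ one position along $C$ (opposite to the hole's direction) and leaves all other players fixed. I would build the tour by successive extensions, one for each player $x$ and each target vertex $w$.
\begin{enumerate}[(i)]
\item Let $u$ be the current position of $x$, with $u\neq w$. By Theorem~\ref{thm:2connected}, $u$ and $w$ lie on a common cycle $C$.
\item Bring the hole to $C$ without disturbing $x$. If the hole position $h$ is on $C$, do nothing; note $h\neq u$. Otherwise, by 2-connectivity (the fan lemma, i.e.\ Menger with $|V(C)|\ge 3$) there are two $h$--$C$ paths meeting $C$ only at distinct endpoints. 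Choose one whose endpoint is not $u$ and walk the hole along it. This shifts only players on that path, which avoids $u$.
\item Rotate the hole around $C$ repeatedly. Each full rotation moves $x$ one step along $C$, so after at most $|V(C)|-1$ rotations $x$ is at $w$.
\end{enumerate}
Doing this for all players and all vertices, the final concatenated march is a patient tour of $n-1$ players. It is collision-free, and $m_G(F)=1$ because some vertices are adjacent. Hence $\ccap{1}{G}\ge n-1$. Combined with the noted bound $\ccap{1}{G}<n$, this gives equality.

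The main obstacle is step (ii): steering the hole onto the chosen cycle without displacing the tracked player. The fan-lemma choice of an endpoint other than $u$ handles this. The rest, namely the dual reformulation and the rotation bookkeeping, is routine.
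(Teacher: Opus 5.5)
Your proof is correct. It rests on the same hole (dual) reformulation and Whitney's cycle theorem as the paper, but both directions are organized differently.

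\textbf{Necessity.} The paper follows one player until it first enters a second component of $G-c$, and shows the hole would then have to lie in two components at once. You instead show that the set of players on $C_2$ is the same at every visit of the hole to $v$. Lumping all remaining components into $C_2$ handles the case where $G-v$ has three or more components for free. By contrast, the paper's claim $f_i(t''-1)\in V(G')$ tacitly assumes the player did not detour through a third component; this is harmless but needs a relabeling.

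\textbf{Sufficiency.} The paper drives the hole along a spanning walk. At each stop $w_r$ it takes a cycle through the player's position and $w_r$ itself. Since the hole already lies on that cycle, no routing is needed and Whitney's theorem alone suffices. Your pair-by-pair scheme targets an arbitrary $w$, which is what forces your extra steering step. The fan lemma does the job, but it is enough that $G-u$ is connected: take a path in $G-u$ from $h$ to $V(C)\setminus\{u\}$ and stop at its first vertex on $C$. In exchange, your explicit rotation analysis proves something the paper only asserts: a tour on $C$ can be chosen to start and end with the hole at the same vertex. (For $d=1$, the explicit construction in Proposition~\ref{prop:cyc_c} ends with the hole one vertex away from where it started.)

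There are a few small points to tidy up.

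First, one rotation does not move every player on $C$ by exactly one vertex. The player next to the hole's base vertex, in the direction of travel, advances two steps and passes through the base vertex. The accurate statement is that each rotation advances every player one step in the cyclic order of the $|V(C)|-1$ non-base vertices. Hence within $|V(C)|-1$ rotations $x$ \emph{visits} every vertex of $C$. If $w$ is the base vertex, $x$ is at $w$ only mid-rotation, which is all you need.

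Second, in the necessity argument you propagate the invariant only forward from the chosen time. You also need that players of $S_2$ never occupy $C_1$ at earlier times. Either take the first time the hole is at $v$, or apply the same argument to the reversed march. In the first option, before that time the hole stays in one component of $G-v$, and the player at $v$ moves only in the last step.

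Third, restrict step (iii) to moves that keep the hole in $C_1\cup\{v\}$. The move taking the hole from $v$ into $C_2$ involves a vertex of $C_2$; you handle that move in (iv).
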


\begin{proof}
Suppose that $G$ is Cartesian-topfull. By definition, there exists a patient collision-free $\ell$-tour $F = \{f_1, \ldots, f_{n-1}\}$ on $G$. This implies that for any $t \in \{0, \ldots, \ell\}$, it holds true that $|\{f_1(t), \ldots, f_{n-1}(t)\}|=n-1$ and that at the time $t$ exactly one vertex is vacant. Let $D_F=\{p\}$ be the dual of $F$. By definition, this dual consists of a single track $p: \{0, \ldots, \ell\} \rightarrow V(G)$, which represents the movement of the vacant vertex. For any $t \in \{0, \ldots, \ell-1\}$, the patient property dictates that there is exactly one $j \in \{1, \ldots, n-1\}$ such that $f_j(t) \neq f_j(t+1)$. Furthermore, the collision-free property requires that the destination of this moving track must be the currently vacant vertex; thus, $f_j(t+1) = p(t)$. Consequently, the vertex vacated by $f_j$ becomes the newly vacant vertex at time $t+1$, yielding $p(t+1) = f_j(t)$. Therefore, any movement by a track $f_j$ along an edge precisely corresponds to the dual track $p$ traversing the same edge in the opposite direction.

Towards a contradiction, suppose $c \in V(G)$ is a cut vertex of $G$. Let $G'$ and $G''$ be two distinct components of $G-c$. Without the loss of generality, assume that for some track $f_i \in F$, the initial position is $f_i(0) \in V(G')$. Because $f_i$ is a track, it is surjective and must eventually visit $G''$. Let $t' = \min\{t \mid f_i(t) \in V(G'')\}$. Any path from $G'$ to $G''$ must pass through $c$, hence $f_i(t'-1) = c$. Let $t''=\min\{t \mid f_i(x)=c \text{ for all } t\leq x<t'\}$. Note that $t''$ exists, since $t'-1$ satisfies the condition. Moreover, the choice of $t''$ implies that $f_i(t''-1)\in V(G')$ and $p(t''-1)=c$. Since $f_i(t'')=c$, therefore $p(t'')=f_i(t''-1) \in V(G')$. Moreover, for all $t'' \leq x<t'$ it holds true that $f_i(x)=c$, hence $p(x)\in V(G')$, for all $t'' \leq x<t'$.
Therefore $p(t'-1) \in V(G')$. However, since $f_i$ moves from $c$ to $G''$ at step $t'$, the vacant vertex must be located in $G''$, requiring $p(t'-1) = f_i(t') \in V(G'')$. This yields a contradiction ($V(G') \cap V(G'') = \emptyset$). Hence, a Cartesian-topfull graph contains no cut vertices.

For the converse, assume $G$ has no cut vertices. By Theorem \ref{thm:2connected}, any two distinct vertices in $G$ lie on a common cycle. We construct a patient collision-free $\ell$-tour $F = \{f_1, \ldots, f_{n-1}\}$ on $G$. Let $W = w_0, w_1, \ldots, w_k$ be a spanning walk on $G$. Let $D=\{p\}$ be an initial patient $0$-march with $p(0) = w_0$. Let $F'=\{f'_1, \ldots, f'_{n-1}\}$ be the dual of $D$, hence $\{f'_1(0), \ldots, f'_{n-1}(0)\} = V(G)\setminus \{w_0\}$.

We systematically extend the march $F'$ to guarantee surjectivity for every weak walk in $F'$. Assume $F'$ is currently of length $\ell'$ with the dual $D=\{p\}$ such that $p(\ell')=w_r$. We perform a sequence of extensions of $F'$ such that for each $i \in \{1, \ldots, n-1\}$ the weak walk $f'_i$ visits $w_r$. 

For each $i \in \{1, \ldots, n-1\}$, denote by $u_i = f'_i(\ell')$ (i.e., the current position of $f'_i$). If $u_i = w_r$, the weak walk $f'_i$ has already visited the desired vertex. If $u_i \neq w_r$, by Theorem \ref{thm:2connected}, there exists a cycle $C$ containing both $u_i$ and $w_r$. By Proposition \ref{prop:cyc_c}, cycles are Cartesian-topfull. Therefore, there exists a patient collision-free $m$-tour $F_C=\{g_1,\ldots, g_{|V(C)|-1}\}$ on $C$, for some $m \in \mathbb{N}$. We can choose $F_C$ such that its initial and final occupied vertices are precisely $V(C) \setminus \{w_r\}$; i.e. $F_C(0) = F_C(m) = V(C) \setminus \{w_r\}$.

We extend the tour $F_C$ which is on the cycle $C$, to a patient collision-free $m$-march $F^*$ on the entire graph $G$ as follows. Let $V_{out} = V(G) \setminus V(C)$. For each $v \in V_{out}$, we define a weak walk $s_v$, by $s_v(t) = v$ for all $t \in \{0, \ldots, m\}$, and let $S = \{s_v \mid v \in V_{out}\}$. We then define the march on $G$ as $F^* = F_C \cup S$. 
    
By construction, $F^*$ consists of exactly $n-1$ weak walks. The set of occupied vertices at the beginning of $F^*$ evaluates to $F^*(0) = F_C(0) \cup \{s_v(0) \mid v \in V_{out}\} = (V(C) \setminus \{w_r\}) \cup V_{out} = V(G) \setminus \{w_r\}$. Because the set of occupied vertices at step $\ell'$ of our current march $F'$ is exactly $F'(\ell') = V(G) \setminus \{w_r\}$, the extension condition $F'(\ell') = F^*(0)$ is satisfied. Hence, we can extend $F'$ by appending the march $F^*$, we set $F':=F^* F'$.

Because $F_C$ is a tour on the cycle $C$, every weak walk within $F_C$ is surjective onto $V(C)$. Consequently, the weak walk in $F^*$ extending $f'_i$ traverses all vertices of $C$, meaning it is guaranteed to visit $w_r$. Thus, after updating $F' := F' F^*$, the weak walk $f'_i$ has successfully visited $w_r$. Note that since $F^*(m) = V(G) \setminus \{w_r\}$, the dual weak walk $p$ (the vacant vertex) correctly returns to $w_r$ at the end of this extension.

We sequentially repeat this evaluation and extension process for every index $i \in \{1, \ldots, n-1\}$. Upon completing this inner sequence, every weak walk $f'_i$ in $F'$ has visited the vertex $w_r$, and the vacant vertex remains located at $w_r$.

To continue the construction, we must move the vacant vertex to the next target in the spanning walk $W$, which is $w_{r+1}$. We extend $F'$ by a single patient collision-free step: the unique track currently occupying $w_{r+1}$ moves along the edge $w_{r+1}w_r$ to $w_r$. By the definition of the dual, this single move shifts the position of the dual track $p$ exactly to $w_{r+1}$. 

We iterate this entire procedure over the index $r \in \{0, \ldots, k-1\}$ along the spanning walk $W$. Because $W$ is a spanning walk, every vertex $v \in V(G)$ appears as the target $w_r$ at least once. At each such stop, the march is systematically extended to ensure that every weak walk $f'_i$ visits that specific vertex. Let $F$ be the final march obtained after the walk $W$ is fully traversed. Since the extension sequence explicitly forces every $f'_i \in F$ to visit every vertex in $V(G)$, each weak walk is surjective. Therefore, $F$ is a patient collision-free tour, which concludes that $G$ is Cartesian-topfull.
\end{proof}

\section{Conslusion and open problems}

In this paper, we introduced the notion of span capacities of graphs as a natural generalization of spans of graphs from the two-player setting to configurations involving multiple players maintaining a prescribed minimum distance. We defined three variants of $d$-capacity — strong, direct, and Cartesian — corresponding to the classical movement rules used in graph pursuit and traversal problems. To support these definitions, we developed a unified framework based on marches and tours, which allowed us to formally describe simultaneous movement of multiple players on graphs.

We determined some exact capacity values for several fundamental graph classes, including paths, cycles, complete bipartite graphs and balanced Hamiltonian bipartite graphs, and established general bounds for bipartite graphs for $2$-capacity. A central part of the paper was devoted to the study of topfull graphs, namely graphs attaining the maximum possible $1$-capacity. We proved that every connected graph is strong-topfull, characterized direct-topfull graphs through vertex-disjoint cycle/edge covers, and established that Cartesian-topfull graphs are precisely the $2$-connected graphs. These results reveal strong connections between span capacities and classical structural graph properties such as connectivity, Hamiltonicity, regularity, and graph factorizations.

The concept of graph capacities opens several directions for future research. Possible extensions include studying algorithmic complexity questions, determining capacities for additional graph classes, investigating edge-capacity analogues, and exploring asymptotic behaviour of capacities under graph products and other graph operations. 

In Section \ref{sec:topfull}, for each capacity variant we have characterised graphs for which the $1$-capacity of the variant reaches its theoretical maximum. Solutions to the following two problems would generalise these results.

\begin{problem}
    Given a connected graph $G$ and a positive integer $d$, determine a sharp upper bound for the chosen $d$-capacity variant of $G$. 
\end{problem}

\begin{problem}
    Given a positive integer $d$, characterise graphs $G$ for which the chosen $d$-capacity variant attains its theoretical maximum. 
\end{problem}

Section \ref{sec:results} provides some results on $d$-capacities for specific families of graphs and/or for specific values of $d$. We state the following problem which can be extended to other well-known families of graphs and variants of capacities.

\begin{problem}
    Given a tree $T$ and a positive integer $d$, what is the value of $\ccap{d}{T}$?
\end{problem}

Given a connected graph $G$, its span (any variant) can be determined in polynomial time \cite{BaTa23}. The following problem is then natural to ask.

\begin{problem}
    Given a connected graph $G$ and a positive integer $d$, what is the complexity of determining the chosen $d$-capacity of $G$.
\end{problem}

\section*{Acknowledgments} 
Mateja Gra\v si\v c acknowledges the support of the Slovenian Research and Innovation  Agency (research core funding No. P1-0288). Aljoša Šubašić and Tanja Vojković were partialy supported by the ZMAJ project (IP-UNIST-45) funded by the European Union — NextGenerationEU and the VAL project (PK.3.4.17.0021) funded by the European Regional Development Fund (ERDF). Andrej Taranenko acknowledges the financial support from the Slovenian Research and Innovation Agency (research core funding No. P1-0297, projects N1-0285 and N1-0431).

\bibliographystyle{plain} 
\bibliography{bibliography}

\end{document}